\title{\textbf{Monotonicity of maximal equicontinuous factors and an application
    to toral flows} } \author{T.~Hauser and T.~J\"ager}
\let\epsilon=\varepsilon
\theoremstyle{definition}
\newtheorem{definition}{Definition}[section]
\theoremstyle{default}
\newtheorem{theorem}[definition]{Theorem}
\newtheorem{proposition}[definition]{Proposition}
\newtheorem{lemma}[definition]{Lemma}
\newtheorem{corollary}[definition]{Corollary}
\newtheorem*{theoremA}{Theorem A}
\newtheorem*{theoremB}{Theorem B}
\newtheorem*{acknowledgement}{Acknowledgement}
\theoremstyle{remark}
\newtheorem{remark}[definition]{Remark}
\let\epsilon=\varepsilon
\let\phi=\varphi
\let\theta=\vartheta
\newcommand{\R}{\mathbb{R}}
\newcommand{\N}{\mathbb{N}}
\newcommand{\Z}{\mathbb{Z}}
\begin{document}
\maketitle

\begin{abstract}
We show that for group actions on locally connected spaces the maximal
equicontinuous factor map is always monotone, that is, the preimages of single
points are connected. As an application, we obtain that if the maximal
continuous factor of a homeomorphism of the two-torus is minimal, then it is
either (i) an irrational translation of the two-torus, (ii) an irrational
rotation on the circle or (iii) the identity on a singleton.
\end{abstract}

\section{Introduction}

The notion of maximal equicontinuous factor (MEF) is a key concept in
topological dynamics and plays an significant role for the understanding,
description and classification of many important system classes, including
Toeplitz flows and other almost automorphic minimal subshifts in symbolic
dynamics \cite{auslander1988minimal,Downarowicz2005ToeplitzFlows} or model sets
and their associated Delone dynamical systems in aperiodic order
(e.g.\ Schlottmann
\cite{Schlottmann1999GeneralizedModelSets,BaakeGrimm2013AperiodicOrder}). It is
also closely related to the notion of (continuous) dynamical eigenfunctions,
since the latter can be viewed as factor maps to a circle rotation and the MEF
comprises all information about such equicontinuous factors.

However, despite its significance, little attention has been given to additional
structural properties that the MEF and its associated factor map may inherit
from the considered dynamical system or the ambient space. One such property is
monotonicity, in the sense that the preimages of singletons under the factor map
are connected subsets of the phase space. This often plays an important role
when it comes to using information about the factor to obtain insight about the
extension. A paradigmatic example in this context is Poincar\'e's classification
of orientation-preserving circle homeomorphisms \cite{poincare:1885}, which
ensures that if the rotation number of such a map is irrational, then it has an
irrational circle rotation as a factor. In this setting, the factor map is known
to be monotone, and this is crucial for deriving further information such as the
unique ergodicity or the uniqueness of the minimal set of the considered circle
homeomorphism (e.g.\ \cite{katok/hasselblatt:1997}).

Considerable efforts have been made over the last decades to extend Poincar\'e's
approach to higher dimensions, and in particular to the two-dimensional torus
\cite{MisiurewiczZiemian1989RotationSets,franks:1988,franks:1989,KoropeckiTal2012StrictlyToral,BoylandDeCarvalhoHall2016NewRotationSets}. In
particular, several conditions in terms of rotation vectors and their
convergence properties have been established that ensure the existence of
irrational circle rotations or irrational torus translations as factors
\cite{Jaeger2009Linearisation,JaegerTal2016IrrationalRotationFactors,JaegerPasseggi2015SemiconjugateToIrrational,%
  JaegerPasseggi2015SemiconjugateToIrrational}. However, in all these situations
it has not been know whether these irrational factors coincide with the
MEF. Likewise, the monotonicity has only been known for factor maps to
irrational circle rotations, where it can be shown {\em `by hand'} (see
\cite{JaegerPasseggi2015SemiconjugateToIrrational}), but not in the case where
the factor is an irrational translation of the torus.\medskip

It turns out that monotonicity of the maximal equicontinuous factor map can be
established in considerable generality.
\begin{theoremA}
  Suppose that $T$ is a topological group acting on a locally connected compact
  Hausdorff space $X$. Then the factor map to the MEF of $(X,T)$ is monotone.
\end{theoremA}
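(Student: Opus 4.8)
The plan is to study the factor map $\pi\colon X\to Z$ onto the MEF through its monotone--light factorisation. Since the monotone--light factorisation is available for maps of compact Hausdorff spaces, I can write $\pi=l\circ m$ with $m\colon X\to W$ monotone — the fibres of $m$ being exactly the connected components of the fibres of $\pi$ — and $l\colon W\to Z$ light, i.e.\ with totally disconnected fibres. As every $t\in T$ permutes the fibres of $\pi$, hence their components, $W$ carries a $T$-action making $m$ and $l$ factor maps, and monotonicity of $\pi$ is precisely the statement that $l$ is injective, i.e.\ (a continuous bijection of compact Hausdorff spaces) a homeomorphism. This is what I must prove.

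Before the main work I would record two reductions. First: the decomposition of $X$ into components of $\pi$-fibres is upper semicontinuous with continua as blocks, so by the classical fact that quotients of locally connected compacta by such decompositions remain locally connected, $W$ is locally connected and compact Hausdorff. Second: the maximal equicontinuous factor of $W$ is again $Z$, realised by $l$ — any equicontinuous factor of $W$ is one of $X$, hence a factor of $Z$, while $l$ exhibits $Z$ as an equicontinuous factor of $W$. It therefore suffices to prove: \emph{if $V$ is a locally connected compact Hausdorff $T$-space whose maximal equicontinuous factor map $l\colon V\to Z$ is light, then $l$ is a homeomorphism} — equivalently, the regionally proximal relation $Q_V$ of $V$ reduces to the diagonal, so $V$ is itself equicontinuous.

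Assume, towards a contradiction, that some fibre $F=l^{-1}(z_0)$ has more than one point. As it is compact Hausdorff and totally disconnected, $F=F_1\sqcup F_2$ with $F_1,F_2$ nonempty and relatively clopen. Using normality of $V$ to separate $F_1,F_2$ by disjoint open sets and then local connectedness to shrink the base and pass to unions of connected components, I would obtain an open neighbourhood $N$ of $z_0$ in $Z$ and a partition $l^{-1}(N)=G_1\sqcup G_2$ into disjoint open subsets of $V$ with $F_i\subseteq G_i$, arranged so that each $G_i$ is a union of connected components of $l^{-1}(N)$ and every such component meets $F$. Local connectedness enters here in two ways: it makes the pieces $G_i$ honestly open in $V$, and it presents them as unions of connected sets, each joined to $F$ and hence moved rigidly, as a connected set, by every homeomorphism $t$.

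The decisive step is to turn this local splitting into a contradiction with the maximality of $Z$. Here one uses that $T$ acts equicontinuously on $Z$: the translates $tN$ together with the transported partitions $\{tG_1,tG_2\}$ cohere well enough that, after passing to convergent subnets of a net $t_i\in T$ and of points $a_i\in G_1,\,b_i\in G_2$ approaching $F_1,F_2$ respectively, the globally defined ``which side'' data can be assembled into a proper equicontinuous factor of $V$ strictly between $V$ and $Z$ — contradicting that $Z$ is the maximal equicontinuous factor — or, equivalently, forces a pair into $Q_V$ in a manner incompatible with $l$ being the MEF map. I expect this to be the main obstacle: the action on $V$ is merely continuous, not equicontinuous, so one must ensure that the separation of the fibre is not destroyed in the limit, and it is exactly the rigidity of connected neighbourhoods granted by local connectedness that prevents this.
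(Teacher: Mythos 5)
Your two reductions are fine: the monotone--light factorisation $\pi=l\circ m$ exists, the intermediate space $W$ is compact Hausdorff (the decomposition into components of $\pi$-fibres is upper semicontinuous, which is exactly the Daverman fact the paper also invokes) and locally connected as a quotient of a locally connected space, the $T$-action descends, and the argument that the MEF of $W$ is again $Z$ via $l$ is correct. So you have correctly reduced Theorem A to the statement that a \emph{light} MEF map $l\colon V\to Z$ on a locally connected compact Hausdorff flow is injective. But this reduced statement carries the entire content of the theorem (indeed it follows immediately \emph{from} Theorem A applied to $V$), and your treatment of it is a sketch of intent, not a proof. The decisive step --- assembling the ``which side'' data from the translated partitions $\{tG_1,tG_2\}$ into either an equicontinuous factor strictly finer than $Z$ or a regionally proximal pair incompatible with $l$ --- is exactly what is missing, and you say so yourself (``I expect this to be the main obstacle'').

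Two concrete reasons why the sketch does not close. First, since the action on $V$ is only continuous, the translates $tN$ overlap and the transported partitions need not cohere: nothing guarantees a globally well-defined, continuous, $T$-compatible side function, and connectedness of the pieces alone does not supply the required rigidity, because the components of $l^{-1}(N)$ need not even meet $F$ and can be torn arbitrarily far apart by the dynamics. Second, the alternative route of ``forcing a pair into $Q_V$'' must produce a regionally proximal pair lying in \emph{distinct} fibres of $l$ (pairs inside one fibre are harmless, and in fact points of the same fibre need not be regionally proximal at all, since the MEF relation is only the smallest closed invariant equivalence relation \emph{generated} by $S_{rp}$); your construction never exhibits such a pair. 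By contrast, the paper's proof attacks the generated relation directly: local connectedness gives monotone entourages $\beta\in\mathcal{U}_X$, whence each class of $S_{rp}=\bigcap_\alpha\overline{\alpha T}$ is connected by a Cantor-type intersection lemma for nets, and then the monotone refinement of $S_{eq}$ is shown to be closed, invariant and to contain $S_{rp}$, so it equals $S_{eq}$. If you want to salvage your route, you would still need an argument of essentially that strength to rule out a disconnected (totally disconnected) fibre of $l$; as it stands the proposal has a genuine gap at its core step.
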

The proof is based on a careful analysis the regionally proximal relation, which
can be used to define the maximal equicontinuous structure relation in a
constructive. The classes of the former are shown to be connected, and this
carries over to the equivalence classes of the latter.\medskip

As an application, we then turn to toral flows and obtain a classification of
the possible minimal equicontinuous factors.

\begin{theoremB}
  Suppose that $f$ is a homeomorphism of the two-torus. If the MEF of
  $(\mathbb{T}^2,f)$ is minimal, then it must be one of the following three:
  \begin{itemize}
  \item[(i)] an irrational translation on the two-torus;
  \item[(ii)] an irrational rotation on the circle;
  \item[(iii)] the identity on a singleton. 
  \end{itemize}
\end{theoremB}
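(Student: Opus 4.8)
The plan is to pin down the MEF explicitly by combining Theorem~A with the Halmos--von Neumann structure theory and a cohomological rigidity property of monotone maps. Write $\pi\colon \mathbb{T}^2 \to Y$ for the factor map onto the MEF $(Y,g)$. Since by assumption $(Y,g)$ is minimal and equicontinuous, the standard structure theory of such systems (via the Ellis group; see e.g.\ \cite{auslander1988minimal}) shows that, up to topological conjugacy, $Y=G$ is a compact monothetic abelian group and $g$ is translation by a topological generator. As $Y$ is a continuous image of the connected space $\mathbb{T}^2$, the group $G$ is connected, so its Pontryagin dual $\widehat{G}$ is torsion-free.

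The crucial step uses Theorem~A: the factor map $\pi$ is monotone and surjective, so every fibre $\pi^{-1}(y)$ is a nonempty compact connected set. The Vietoris--Begle mapping theorem then yields that the induced homomorphism $\pi^{*}\colon \check{H}^1(Y;\mathbb{Z}) \to \check{H}^1(\mathbb{T}^2;\mathbb{Z})$ is injective. Since $\check{H}^1(\mathbb{T}^2;\mathbb{Z}) \cong \mathbb{Z}^2$, the group $\check{H}^1(Y;\mathbb{Z})$ is isomorphic to a subgroup of $\mathbb{Z}^2$, hence free abelian of some rank $n\le 2$. (Equivalently, one can argue that a monotone surjection does not raise topological dimension, so $\dim Y\le 2$, and then use the classification of low-dimensional compact connected abelian groups.)

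Next I would invoke the identity $\check{H}^1(G;\mathbb{Z}) \cong \widehat{G}$, valid for any compact connected abelian group $G$: write $G$ as the inverse limit of the tori $\mathbb{T}^{m}$ dual to the finitely generated (hence free, by torsion-freeness) subgroups of $\widehat{G}$, and apply continuity of \v{C}ech cohomology. Combined with the previous step, $\widehat{G}\cong \mathbb{Z}^n$ with $n\in\{0,1,2\}$, so $G \cong \mathbb{T}^n$ as a topological group and $g$ corresponds to a minimal translation on $\mathbb{T}^n$. Reading off the three cases finishes the proof: for $n=0$, $G$ is a singleton and $g$ the identity (case~(iii)); for $n=1$, a minimal translation of the circle is an irrational rotation (case~(ii)); for $n=2$, a minimal translation of $\mathbb{T}^2$ is by definition an irrational translation (case~(i)).

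The only genuine obstacle is the middle step. Without additional input one cannot exclude the MEF being, for instance, a minimal translation on $\mathbb{T}^3$ or on the infinite-dimensional torus $\mathbb{T}^{\mathbb{N}}$: both are compact monothetic abelian Peano continua, hence continuous --- but, decisively, \emph{not} monotone --- images of $\mathbb{T}^2$. Theorem~A supplies precisely the missing monotonicity, after which the remainder of the argument is routine bookkeeping with classical structure theory.
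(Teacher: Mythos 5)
Your proof is correct, but it takes a genuinely different route from the paper's. After Theorem A, the paper does not use the group structure of the MEF at all; it classifies the quotient space by surface decomposition theory: the fibres of the monotone factor map are continua that are inessential, essential or doubly essential, minimality together with a Brouwer-type fixed point lemma (Lemma~\ref{lem:Brouwerrevisited}, after reducing to a strongly effective action via Lemma~\ref{lem:nontrivialmonotoneminimaldiscretefactoriseffective}) shows the fibres are non-separating, and then Moore's theorem (Theorem~\ref{the:Moorestheorem}) respectively the circle criterion (Proposition~\ref{pro:kleinerMoore}) identifies $\mathbb{T}^2\big/S_{eq}$ with $\mathbb{T}^2$, $S^1$ or a point; this yields the stronger Theorem~\ref{the:characterizationofminimalfactorsofflowsonTorus}, valid for \emph{every} monotone minimal factor, equicontinuous or not. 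You instead exploit equicontinuity from the outset: the MEF is a minimal translation of a compact connected monothetic group $G$, monotonicity (Theorem A) feeds into Vietoris--Begle to embed $\check{H}^1(Y;\mathbb{Z})\cong\widehat{G}$ into $\check{H}^1(\mathbb{T}^2;\mathbb{Z})\cong\mathbb{Z}^2$, and Pontryagin duality forces $G\cong\mathbb{T}^n$ with $n\le 2$. This is shorter, bypasses Moore's theorem and the whole effectiveness/fixed-point discussion, and it delivers the identification with an irrational translation or rotation directly (a point the paper obtains from the standard theory of minimal equicontinuous flows on top of its topological classification); the trade-off is that your argument gives no information about monotone minimal factors which are not equicontinuous. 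One caveat: your parenthetical shortcut (``monotone surjections do not raise dimension, then classify low-dimensional compact connected abelian groups'') is not an equivalent argument --- dimension-raising monotone (indeed cell-like) maps do exist in general, and in any case dimension alone cannot single out tori, since solenoids are one-dimensional compact connected monothetic groups; it is precisely your cohomological step (a subgroup of $\mathbb{Z}^2$ is finitely generated and free) that excludes them, so the Vietoris--Begle line should be kept as the actual argument.
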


\begin{acknowledgement}
  TJ has been supported by a Heisenberg professorship of the German Research
  Council (DFG grant OE 538/6-1). 
\end{acknowledgement}

\section{Monotonicity of the maximal equicontinuous factor} \label{MonotoneMEF}

\subsection{Notions and preliminaries}

A \emph{flow} is a triple $(X,T,f)$ where $X$ is a topological space, $T$ is a
topological group and $f$ is a continuous map of $X\times T$ to $X$, such that
$f(\cdot,e)$ is the identity on $X$ ($e$ is the identity element of $T$) and
such that $f(f(\cdot,s),t)=f(\cdot,st)$ for $s,t\in T$. Each $t\in T$ defines a
homeomorphism $f^t\colon X \to X$ by $f^t:=f(\cdot,t)$. To simplify notation we
usually write $(X,T)$ instead of $(X,T,f)$ and denote the action of $t\in T$ on
$X$ by $f^t$. A flow $(X,T)$ is called \emph{discrete} if $T=\mathbb{Z}$.

Let $X,Y$ be sets and $R\subseteq X \times Y$. For $x \in X$ and $y \in Y$
define
\[[x]R:=\{y' \in Y;\, (x,y')\in R\} \text{ and } R[y]:=\{x' \in X;\, (x',y)\in R\}.\]
For $M\subseteq X$ and $N\subseteq Y$ define 
\[[M]R:=\bigcup_{x \in M}[x]R \text{ and } R[N]:=R(N):=\bigcup_{y \in N}R[y].\] 
For $S\subseteq Y\times Z$ define 
\[R S:= S \circ R:=\{(x,z)\in X \times Z;\, \exists y \in Y\colon (x,y)\in R \text{ and } (y,z) \in S)\}.\]
If $X=Y$ define $R^{-1}:=\{(y,x);\, (x,y)\in R\}$.

Note that $(RS)Q=R(SQ)$, $([x]S)Q=[x](SQ)$ and $(RS)[y]=R(S[y])$ for sets
$W,X,Y,Z$, elements $x\in X$, $y\in Y$ and relations $R\subseteq W\times X$,
$S\subseteq X\times Y$ and $Q\subseteq Y\times Z$. Thus we usually omit the
brackets and write shortly $RSQ$, $[x]SQ$, and $RS[y]$ respectively.

During this paper we will identify a function $f\colon X \to Y$ with its graph
$\{(f(x),x);\, x\in X\}$. Note that this graph is considered to be a subset of
$Y\times X$. Under this identification we obtain the usual notation
\[f(M)=\bigcup_{x \in M}\{y \in Y;\, (y,x)\in f\}=\{f(x);\, x \in M\}\]
for all $M\subseteq X$ and 
\[f^{-1}(N)=[N]f=\bigcup_{y \in N}\{x \in X;\, (y,x)\in f\}=\{x \in X;\, f(x) \in N\}\]
for all $N \subseteq Y$.

For a metric space $(X,d)$ denote $B_\epsilon:=\{(x,y)\in X^2;\,
d(x,y)<\epsilon\}$ and $\overline{B}_\epsilon=\{(x,y)\in X^2;\,
d(x,y)\leq\epsilon\}$ for $\epsilon>0$. With the above notation we have that
$B_\epsilon[x]$ is the open ball around $x$ with radius $\epsilon$ and
$B_\epsilon(M)=\bigcup\{B_\epsilon[m];\, m \in M\}$ for $M\subseteq X$.

	Let $X$ be a compact Hausdorff space. Then the collection
        $\mathcal{U}_X$ of all neighbourhoods of the diagonal $\Delta\subseteq X
        \times X$ is a uniformity which generates the topology of
        $X$. Furthermore $\mathcal{U}_X$ is the only uniformity that is
        compatible with the topological structure of $X$. 
	
If $S$ is an equivalence relation on a set $X$, denote by $X\big/ S$ the
corresponding partition by equivalence classes. If $S$ is an equivalence
relation on a topological space $X$, then the \emph{quotient space} $X\big/ S$
is equipped with the \emph{quotient topology}, i.e. the finest topology on
$X\big/ S$ such that the \emph{factor map} $\pi_S\colon X \rightarrow
\mathcal{P}: x \mapsto S[x]$ is continuous.  The following lemma can be found in
\cite[Proposition 1]{Daverman}.

\begin{lemma} \label{lem:Uppersemicontinuityproperty_characterization}
	Let $X$ be a topological space and $S$ an equivalence relation on $X$
        such that all elements of $X\big/S$ are closed. The following statements
        are equivalent.
	\begin{itemize}
		\item[(i)] For each $M \in X\big/S$ and each open $U \subseteq
                  X$ with $M \subseteq U$ there exists an open set $V\subseteq
                  X$ with $M \subseteq V$ and such that whenever $N\in X\big/S$
                  satisfies $N\cap V\neq \emptyset$ we have $N \subseteq U$.
		\item[(ii)] For every open subset $U\subseteq X$ the set
                  $U^*:=\bigcup\{M \in X\big/S;\, M \subseteq U \}$ is open.
		\item[(iii)] The projection mapping $\pi_S$ is a closed mapping,
                  i.e. $\pi_S$ maps closed sets to closed sets.
	\end{itemize}
\end{lemma}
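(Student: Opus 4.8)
My plan is to prove the three statements equivalent via the cyclic chain (i) $\Rightarrow$ (ii) $\Rightarrow$ (iii) $\Rightarrow$ (i); each implication is a direct unravelling of definitions. The one point that needs care is the bookkeeping relating the set $U^*$, the $\pi_S$-saturation $\pi_S^{-1}(\pi_S(C))$ of a subset $C\subseteq X$, and the defining feature of the quotient topology, namely that $A\subseteq X\big/S$ is closed if and only if $\pi_S^{-1}(A)$ is closed in $X$. So I would record at the outset the two elementary identities: for $x\in X$ one has $x\in\pi_S^{-1}(\pi_S(C))$ iff $S[x]\cap C\neq\emptyset$, and consequently $X\setminus\pi_S^{-1}(\pi_S(C))=(X\setminus C)^*$.

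For (i) $\Rightarrow$ (ii) I would fix an open $U$ and a point $x\in U^*$, apply (i) to the class $M:=S[x]\subseteq U$ to obtain an open $V\supseteq M$ such that every class meeting $V$ lies in $U$, and then observe that each $y\in V$ satisfies $S[y]\cap V\neq\emptyset$, hence $S[y]\subseteq U$ and $y\in U^*$; thus $V\subseteq U^*$, so $U^*$ is a neighbourhood of $x$. For (ii) $\Rightarrow$ (iii), given a closed $C\subseteq X$, I want $\pi_S(C)$ closed, equivalently $\pi_S^{-1}(\pi_S(C))$ closed in $X$; by the identity above its complement is $(X\setminus C)^*$, which is open by (ii) since $X\setminus C$ is open.

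For (iii) $\Rightarrow$ (i), given $M\in X\big/S$ and an open $U\supseteq M$, I would set $C:=X\setminus U$, note that $C$ is closed and $M\notin\pi_S(C)$ (a point of $M$ lying in $C$ would contradict $M\subseteq U$), and use (iii) to conclude that $W:=(X\big/S)\setminus\pi_S(C)$ is an open set containing the point $M$. Then $V:=\pi_S^{-1}(W)$ is open in $X$ and contains $M$ (every $x\in M$ maps to $M\in W$), and if a class $N$ meets $V$ at a point $y$ then $N=S[y]\in W$, so $N\notin\pi_S(C)$, meaning $N$ is disjoint from $C$ and hence $N\subseteq U$ --- which is exactly (i).

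As for the main obstacle: there is really none of substance, since no step genuinely uses the hypothesis that the classes are closed (it is kept only to place the statement in the standard framework of upper semicontinuous decompositions, where $X\big/S$ is then $T_1$). The only thing to stay vigilant about is not to confuse $\pi_S(C)$, a subset of the quotient, with its preimage $\pi_S^{-1}(\pi_S(C))$, the saturation of $C$ in $X$, and to invoke the quotient-topology closedness criterion in the correct direction.
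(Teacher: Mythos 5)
Your proposal is correct. Note that the paper itself gives no proof of this lemma at all --- it simply cites \cite[Proposition 1]{Daverman} --- so there is no in-paper argument to compare against; your cyclic chain (i) $\Rightarrow$ (ii) $\Rightarrow$ (iii) $\Rightarrow$ (i) is the standard self-contained verification and every step checks out: the two bookkeeping identities ($x\in\pi_S^{-1}(\pi_S(C))$ iff $S[x]\cap C\neq\emptyset$, and $X\setminus\pi_S^{-1}(\pi_S(C))=(X\setminus C)^*$) are exactly what is needed, the passage from (ii) to (iii) correctly uses the quotient-topology criterion that a subset of $X\big/S$ is closed precisely when its $\pi_S$-preimage is closed, and in (iii) $\Rightarrow$ (i) the observation that $N\notin\pi_S(C)$ forces $N\cap C=\emptyset$, hence $N\subseteq U$, closes the circle. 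Your side remark is also accurate: none of the three implications uses the hypothesis that the equivalence classes are closed; that assumption (together with compactness of classes, added later in the paper's definition of upper semicontinuity) only serves to place the lemma in the usual framework of upper semicontinuous decompositions and plays its role in subsequent results such as Proposition \ref{pro:Uppersemicontinu_is_closed_CompactHausdorff}, not in this equivalence itself.
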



An equivalence relation $S$ is called \emph{upper semicontinuous}, if it
satisfies one of the properties in Lemma
\ref{lem:Uppersemicontinuityproperty_characterization} and if every $M \in X
\big/ S$ is a compact subset of $X$. A binary relation $R$ on a topological
space $X$ is called \emph{closed}, if it is closed as a subset of $X \times
X$. The next Proposition shows that on compact Hausdorff spaces the notions of
closed equivalence relations and upper semi-continuous equivalence relations
coincide. We include a proof for the convenience of the reader.

\begin{proposition} \label{pro:Uppersemicontinu_is_closed_CompactHausdorff}
	Let $X$ be a compact Hausdorff space and $S$ an equivalence relation on
        $X$. The following statements are equivalent.
\begin{itemize}
	\item[(i)] $S$ is a upper semicontinuous.
	\item[(ii)] $S$ is closed.
	\item[(iii)] $X\big/S$ is a compact Hausdorff space.
\end{itemize} 
\end{proposition}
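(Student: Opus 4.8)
The plan is to prove the cycle of implications (i) $\Rightarrow$ (iii) $\Rightarrow$ (ii) $\Rightarrow$ (i). Throughout, $X$ is compact Hausdorff, hence normal, and we freely use that $\mathcal{U}_X$ is the unique compatible uniformity.

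For (i) $\Rightarrow$ (iii): assume $S$ is upper semicontinuous, so by Lemma \ref{lem:Uppersemicontinuityproperty_characterization}(iii) the factor map $\pi_S$ is closed, and all classes are compact. Since $\pi_S$ is a continuous closed surjection and $X$ is compact, $X\big/S$ is compact. For the Hausdorff property I would take two distinct classes $M,N\in X\big/S$; these are disjoint compact sets in the normal space $X$, so there are disjoint open sets $U\supseteq M$, $W\supseteq N$. Applying Lemma \ref{lem:Uppersemicontinuityproperty_characterization}(ii) to $U$ and $W$ gives saturated open sets $U^*\supseteq M$, $W^*\supseteq N$ with $U^*\subseteq U$, $W^*\subseteq W$, hence $U^*\cap W^*=\emptyset$. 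Because these are saturated, $\pi_S(U^*)$ and $\pi_S(W^*)$ are disjoint, and they are open since the complement of a saturated set is saturated and $\pi_S$ is closed (so $\pi_S$ maps saturated open sets to open sets). This separates $M$ and $N$ in $X\big/S$.

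For (iii) $\Rightarrow$ (ii): if $X\big/S$ is compact Hausdorff, then its diagonal $\Delta_{X/S}$ is closed in $(X\big/S)^2$. The map $\pi_S\times\pi_S\colon X^2\to (X\big/S)^2$ is continuous, and $S=(\pi_S\times\pi_S)^{-1}(\Delta_{X/S})$ by definition of the quotient; hence $S$ is closed in $X^2$. For (ii) $\Rightarrow$ (i): assume $S$ is closed. Every class $S[x]$ is then a closed subset of the compact space $X$, hence compact, so it remains to verify property (i) of Lemma \ref{lem:Uppersemicontinuityproperty_characterization}. Fix a class $M=S[x]$ and an open $U\supseteq M$. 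I would argue by contradiction: if no open $V$ as required exists, then for every entourage-style neighbourhood of $M$ — concretely, working with the neighbourhood filter of the compact set $M$ — one finds a class $N_i$ meeting an arbitrarily small neighbourhood of $M$ but with $N_i\not\subseteq U$, giving points $a_i$ in that neighbourhood of $M$ and $b_i\in N_i\setminus U$ with $(a_i,b_i)\in S$. By compactness of $X$ pass to convergent subnets $a_i\to a\in M$ and $b_i\to b$; then $b\notin U$ since $X\setminus U$ is closed, so $b\notin M$, while closedness of $S$ forces $(a,b)\in S$, i.e. $b\in S[a]=M$ — a contradiction.

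The main obstacle I anticipate is the implication (ii) $\Rightarrow$ (i): one must carefully turn "$V$ can be taken arbitrarily close to $M$" into a genuine net indexed by neighbourhoods of the compact set $M$ (or equivalently by a cofinal family of uniform entourages), and ensure the extracted limit point $a$ genuinely lies in $M$ rather than merely in $\overline{M}=M$. Using the unique uniformity $\mathcal{U}_X$ — replacing "small neighbourhood of $M$" by $V_i=E_i[M]$ for entourages $E_i$ shrinking to $\Delta$ — makes the subnet argument clean, since $a_i\in E_i[M]$ then forces any limit of $(a_i)$ into $M$ by compactness and closedness of $M$. The other implications are essentially formal consequences of Lemma \ref{lem:Uppersemicontinuityproperty_characterization} together with standard facts about closed maps, normality, and quotients of compact spaces.
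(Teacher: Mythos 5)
Your proposal is correct, but it is organised differently from the paper's proof, and the difference is worth noting. You share the implication (i) $\Rightarrow$ (iii) almost verbatim with the paper: both arguments use normality of the compact Hausdorff space $X$ to separate two classes by disjoint open sets and then invoke Lemma \ref{lem:Uppersemicontinuityproperty_characterization}(ii) to pass to disjoint saturated open sets, whose images separate the classes in $X\big/S$ (your justification of openness of these images via closedness of $\pi_S$ is a slight detour; the quotient-topology definition gives it immediately, as the paper's identification $\pi_S^{-1}(\tilde U)=U^*$ shows). The divergence is in the remaining implications: the paper simply cites Rotman (Theorem 8.2) for the equivalence of (ii) and (iii), and then proves (iii) $\Rightarrow$ (i) by the short observations that each class $S[x]=\pi_1(S\cap(X\times\{x\}))$ is closed, hence compact, and that $\pi_S$ is a closed map because it is a continuous map from a compact space to a Hausdorff space. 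You instead close the cycle with a trivial direct proof of (iii) $\Rightarrow$ (ii) (pulling back the closed diagonal under $\pi_S\times\pi_S$) and a genuinely direct proof of (ii) $\Rightarrow$ (i) by a net argument indexed by entourages $E_i[M]$, using compactness of $X$ and closedness of $S$ to force the contradiction $b\in M$; your attention to why the limit $a$ lands in $M$ (intersection of closures of the neighbourhoods $E_i[M]$ of the compact set $M$ is $M$) is exactly the delicate point, and your treatment of it is sound, as is your preliminary remark that closedness of $S$ makes all classes compact. What each approach buys: the paper's route is shorter because the hard equivalence is outsourced and its (iii) $\Rightarrow$ (i) is slick, whereas your route is fully self-contained (no external citation) at the cost of carrying out the nontrivial compactness/uniformity argument yourself.
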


\begin{proof}
	The equivalence of (ii) and (iii) is shown in in \cite[Theorem
          8.2.]{Rotman}.  To obtain that (i) implies (iii) let $S$ be upper
        semi-continuous. Let $M,N \in X\big/ S$. Since $X$ is Hausdorff and $M$
        resp.\ $N$ are compact in $X$, these sets are also closed in $X$. Since
        compact Hausdorff spaces are normal\footnote{A topological space $X$ is
          called \emph{normal}, if every two disjoint closed sets of $X$ have
          disjoint open neighborhoods.} there are disjoint open sets $U$ and $V$
        in $X$ such that $M \subseteq U$ and $N \subseteq V$. Let
        $\tilde{U}:=\{M'\in X\big/ S;\, M' \subseteq U\}$ and define $\tilde{V}$
        analogously. By property (ii) of Lemma
        \ref{lem:Uppersemicontinuityproperty_characterization} the sets
        $\pi_S^{-1}(\tilde{U}) =U^*$ and $\pi_S^{-1}(\tilde{V})=V^*$ are open in
        $X$. Thus $\tilde{U}$ and $\tilde{V}$ are open in $X\big/S$. Since
        $\tilde{U}$ and $\tilde{V}$ are disjoint neighbourhoods of $M$
        resp.\ $N$ we have proven $X\big/S$ to be Hausdorff.
	
	To show the implication from (iii) to (i) assume $X\big/S$ to be a
        compact Hausdorff space, i.e.\ $S$ to be closed, and note that by Lemma
        \ref{lem:Uppersemicontinuityproperty_characterization} it is sufficient
        to show that every $M \in X\big/ S$ is compact and that $\pi_S$ is a
        closed map. Every $M\in X\big/S$ is of the form $S[x]=\pi_1(S\cap (X
        \times \{x\}))$ for some $x \in M$ and therefore closed. Here $\pi_1$
        denotes the projection to the first coordinate. Since $X$ is compact we
        have shown $X\big/S$ to consist of compact sets. Further $\pi_S$ is a
        closed mapping as a continuous function from a compact Hausdorff space
        to a Hausdorff space.
\end{proof}
	
If $X$ and $Y$ are topological spaces, we call $R\subseteq X \times Y$
\emph{monotone}, if $[x]R$ is 
particular we call a function $f: X \rightarrow Y$ to be monotone if
$f^{-1}(\{y\})=[y]f$ is connected for every $y \in Y$. Furthermore an
equivalence relation $S$ on a topological space $X$ is monotone if and only if
$\pi_S$ is monotone, since $\pi_S^{-1}(\{x\})=S[x]=[x]S$ for every $x \in X$.

If $S$ is an equivalence relation on a topological space $X$, then the
collection of all $M\subseteq X$ such that $M$ is a connected component of some
$N \in X\big/S$ is a partition of $X$ into connected sets. We call the
corresponding equivalence relation $S_{mon}$ the \emph{monotone refinement
  relation} of $S$. In \cite[I.4.]{Daverman} it is shown, that the monotone
refinement relation $S_{mon}$ of an an upper semicontinuous equivalence relation
$S$ on a Hausdorff space $X$ is upper semicontinuous. Proposition
\ref{pro:Uppersemicontinu_is_closed_CompactHausdorff} therefore implies the
following.

\begin{proposition} \label{pro:monotonization_is_closed}
	If $X$ is a compact Hausdorff space and $S$ is a closed equivalence
        relation on $X$, then the monotone refinement relation of $S$ is also
        closed.
\end{proposition}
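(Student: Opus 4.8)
The statement to be proved is Proposition~\ref{pro:monotonization_is_closed}: if $X$ is compact Hausdorff and $S$ is a closed equivalence relation on $X$, then $S_{mon}$ is also closed. The strategy is simply to chain together the two facts already collected in the excerpt. First, by Proposition~\ref{pro:Uppersemicontinu_is_closed_CompactHausdorff}, the hypothesis that $S$ is closed is equivalent to $S$ being upper semicontinuous; in particular every class $S[x]$ is a compact (hence closed) subset of $X$, and $\pi_S$ is a closed map. Since $X$ is in particular a Hausdorff space, the result quoted from \cite[I.4.]{Daverman} applies and tells us that $S_{mon}$, the monotone refinement relation of the upper semicontinuous relation $S$, is again upper semicontinuous. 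Finally, one more application of Proposition~\ref{pro:Uppersemicontinu_is_closed_CompactHausdorff} — this time the implication (i)$\Rightarrow$(ii) — converts upper semicontinuity of $S_{mon}$ back into closedness of $S_{mon}$ as a subset of $X\times X$. This closes the argument.

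The one point deserving a line of justification is that $S_{mon}$ is genuinely an equivalence relation on $X$ (so that the quoted statements, which are phrased for equivalence relations, apply to it). This is exactly the content of the sentence preceding the proposition: the connected components of the various classes $N\in X/S$ form a partition of $X$, because distinct classes are disjoint and the components of a fixed class partition that class; the equivalence relation induced by this partition is $S_{mon}$ by definition. Its classes are connected by construction, which is the feature that makes the word ``monotone'' appropriate, though connectedness of classes is not needed for the present proposition — only the partition/upper-semicontinuity bookkeeping is.

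There is essentially no obstacle here: the proposition is a formal corollary, and the only care required is to make sure the hypotheses of the cited results line up (compact Hausdorff $\Rightarrow$ Hausdorff for the Daverman citation; closed $\Leftrightarrow$ upper semicontinuous via the preceding proposition in both directions). I would therefore write the proof as a short paragraph: invoke Proposition~\ref{pro:Uppersemicontinu_is_closed_CompactHausdorff} to pass from ``closed'' to ``upper semicontinuous'', cite \cite[I.4.]{Daverman} to get upper semicontinuity of $S_{mon}$ on the Hausdorff space $X$, and invoke Proposition~\ref{pro:Uppersemicontinu_is_closed_CompactHausdorff} once more to conclude that $S_{mon}$ is closed.
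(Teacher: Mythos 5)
Your proposal is correct and follows exactly the paper's argument: the paper derives this proposition by combining the cited result from Daverman (monotone refinement of an upper semicontinuous relation on a Hausdorff space is upper semicontinuous) with Proposition~\ref{pro:Uppersemicontinu_is_closed_CompactHausdorff} to translate between ``closed'' and ``upper semicontinuous'' in both directions. Your additional remark that $S_{mon}$ is indeed an equivalence relation matches the paper's preceding discussion, so there is nothing to add.
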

	
If $(X,T)$ is a flow, a binary relation $R$ on $X$ is called
\emph{$T$-invariant}, if for all $x \in X$ and all $t \in T$ we have
$f^t([x]R)\subseteq [f^t(x)]R$. Note that this is the case if and only if
$f^t([x]R)= [f^t(x)]R$ for all $x \in X$ and all $t \in T$.

If $\pi\colon X \to Y$ is a factor map between the flows $(X,T)$ and
$(Y,T)$. Then $\{\pi^{-1}(\{y\});\, y \in Y\}$ is a partition of $X$ and the
corresponding equivalence relation $S$ is closed and $T$-invariant. Furthermore
$X\big/S$ and $Y$ are isomorphic. If $S$ is a closed and $T$-invariant
equivalence relation, then $(X\big/S,T)$ is a factor of $(X,T)$. We will
therefore write factors of a flow $(X,T)$ as $(X\big/S,T)$, where $S$ is a
closed and $T$-invariant equivalence relation.

\subsection{Monotonicity of maximal equicontinuous factors} 


The \emph{regionally proximal relation} is defined as $S_{rp}=\bigcap_{\alpha
  \in \mathcal{U}_X}\overline{\alpha T}$, where $\alpha T:=\bigcup_{t\in T}f^t
\alpha f^{(t^{-1})}$ (see \cite{auslander1988minimal}).

\begin{theorem}[{\cite[Chapter 9, Theorems 1 and 3]{auslander1988minimal}}]
	If $(X,T)$ is a flow on a compact Hausdorff space, there is a smallest
        closed $T$-invariant equivalence relation $S_{eq}$ such that the
        quotient flow $(X\big/{S_{eq}},T)$ is equicontinuous. Furthermore
        $S_{eq}$ is the smallest closed and $T$- invariant equivalence relation
        which contains the regionally proximal relation $S_{rp}$.
\end{theorem}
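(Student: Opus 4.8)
The plan is to first construct $S_{eq}$ as an intersection of relations and then to identify it with the smallest closed $T$-invariant equivalence relation containing $S_{rp}$. For the first assertion, let $\mathcal F$ denote the collection of all closed $T$-invariant equivalence relations $S$ on $X$ for which $(X\big/S,T)$ is equicontinuous; note that $X\big/S$ is again compact Hausdorff by Proposition \ref{pro:Uppersemicontinu_is_closed_CompactHausdorff}, so this is meaningful. First I would observe that $\mathcal F\neq\emptyset$, since the full relation $X\times X$ lies in $\mathcal F$ (its factor is the trivial one-point flow). Then I would show that $\mathcal F$ is stable under arbitrary intersections: for a family $(S_i)_{i\in I}$ in $\mathcal F$, the relation $S:=\bigcap_{i\in I}S_i$ is again a closed $T$-invariant equivalence relation, and the canonical map $X\big/S\to\prod_{i\in I}X\big/S_i$, $S[x]\mapsto(S_i[x])_{i\in I}$, is a well-defined, continuous, injective, $T$-equivariant map; since $X\big/S$ is compact and the product is Hausdorff, it is a topological embedding onto a $T$-invariant subset of the product. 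As products of equicontinuous flows are equicontinuous for the product uniformity and invariant subflows of equicontinuous flows are equicontinuous, $X\big/S$ is equicontinuous, i.e.\ $S\in\mathcal F$. Hence $S_{eq}:=\bigcap\mathcal F$ belongs to $\mathcal F$ and is by construction its least element, proving the first part.

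For the second assertion I would use two classical properties of the regionally proximal relation: (a) $S_{rp}$ is always closed, $T$-invariant, reflexive and symmetric, and a compact Hausdorff flow $(Y,T)$ is equicontinuous if and only if $S_{rp}=\Delta$ on $Y$; and (b) for any factor map $\pi\colon X\to Y$ between compact flows, $(\pi\times\pi)(S_{rp}^X)=S_{rp}^Y$. For (a), the ``only if'' part is immediate from the definitions: equicontinuity provides, for each closed $\alpha\in\mathcal U_Y$, some $\alpha'$ with $\alpha' T\subseteq\alpha$, whence $\overline{\alpha' T}\subseteq\alpha$ and so $S_{rp}\subseteq\bigcap_\alpha\alpha=\Delta$; the ``if'' part follows from a compactness argument in $Y\times Y$, using that the closed sets $\overline{\alpha T}$ form a downward directed family with intersection $\Delta$. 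Granting (a) and (b): if $S\in\mathcal F$ then $S_{rp}^{X\big/S}=\Delta$ by (a), and the elementary inclusion $(\pi_S\times\pi_S)(S_{rp}^X)\subseteq S_{rp}^{X\big/S}$ forces $S_{rp}^X\subseteq S$; intersecting over $S\in\mathcal F$ yields $S_{rp}^X\subseteq S_{eq}$, so $S_{eq}$ is a closed $T$-invariant equivalence relation containing $S_{rp}$. Conversely, if $R$ is any closed $T$-invariant equivalence relation with $S_{rp}^X\subseteq R$, then by (b) $S_{rp}^{X\big/R}=(\pi_R\times\pi_R)(S_{rp}^X)\subseteq(\pi_R\times\pi_R)(R)=\Delta$, so $X\big/R$ is equicontinuous by (a); thus $R\in\mathcal F$ and hence $S_{eq}\subseteq R$. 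This shows $S_{eq}$ is the smallest closed $T$-invariant equivalence relation containing $S_{rp}$.

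The main obstacle is property (b), and in particular the inclusion $S_{rp}^Y\subseteq(\pi\times\pi)(S_{rp}^X)$. The reverse inclusion is a routine unwinding of the net characterisation of regional proximality, but this one requires lifting the nets witnessing $(\bar x,\bar y)\in S_{rp}^Y$ back through $\pi$ and extracting convergent subnets in the compact space $X$ to obtain a preimage pair in $S_{rp}^X$. This lifting statement is precisely the non-trivial input borrowed from \cite[Chapter 9]{auslander1988minimal}; everything else reduces to bookkeeping with the closure, intersection and invariance facts assembled in the preliminaries.
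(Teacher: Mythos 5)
Your statement is one the paper does not prove at all -- it is quoted verbatim from Auslander -- so the only meaningful comparison is with the standard argument behind that citation, and there your proposal has a genuine gap. The first half (closing the family $\mathcal F$ under arbitrary intersections via the embedding $X\big/S\to\prod_i X\big/S_i$, so that $S_{eq}=\bigcap\mathcal F$ exists) is correct, and so is (a). The problem is (b), specifically the inclusion $S_{rp}^Y\subseteq(\pi\times\pi)(S_{rp}^X)$ for an arbitrary factor map. The subnet argument you sketch does not prove it: if $(\bar x,\bar y)\in S_{rp}^Y$ and you lift the witnessing nets through $\pi$ and pass to convergent subnets in $X$, the pairs $\bigl(f^{t_i}(x_i),f^{t_i}(y_i)\bigr)$ are only forced to accumulate on the fibre relation $R_\pi=(\pi\times\pi)^{-1}(\Delta_Y)$, not on the diagonal of $X$. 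What you actually obtain is a preimage pair lying in the relative relation $\bigcap\{\overline{W\,T}\,;\,W \text{ a neighbourhood of } R_\pi\}$, which always contains $R_\pi$ and is in general strictly larger than $S_{rp}^X$. So the exact equality (b) is not a routine compactness fact, it is not what Auslander's Chapter 9 supplies, and it is stronger than the theorem requires; as written, the decisive step of your second paragraph (that any closed $T$-invariant equivalence relation $R\supseteq S_{rp}$ has equicontinuous quotient) rests on an unproved assertion.

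The repair is to prove only the relative lifting, and only in the direction where it is needed. Since $R$ is closed, $\pi_R$ is a closed map (Proposition \ref{pro:Uppersemicontinu_is_closed_CompactHausdorff} and Lemma \ref{lem:Uppersemicontinuityproperty_characterization}), so every neighbourhood of $R$ in $X\times X$ contains $(\pi_R\times\pi_R)^{-1}(\beta)$ for some $\beta\in\mathcal U_{X/R}$, and every such preimage contains a set of the form $\alpha R\alpha$ with $\alpha\in\mathcal U_X$. A compactness argument in the spirit of your decreasing-intersection lemmas then shows that any $(\bar x,\bar y)\in S_{rp}^{X/R}$ lifts to a pair in $\bigcap_{\alpha\in\mathcal U_X}\overline{(\alpha R\alpha)T}\subseteq S_{rp}^X\, R\, S_{rp}^X$ (using the $T$-invariance of $R$ and that compositions of closed relations on a compact space are closed). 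If now $S_{rp}^X\subseteq R$, transitivity of $R$ gives $S_{rp}^X R\, S_{rp}^X\subseteq R$, so the lifted pair projects to the diagonal, i.e.\ $S_{rp}^{X/R}=\Delta$, and (a) yields equicontinuity of $X\big/R$. Substituting this for (b) makes your outline sound; note that in general one can only conclude $S_{rp}^Y\subseteq(\pi\times\pi)(S_{rp}^X)\circ(\pi\times\pi)(S_{rp}^X)$, so the wholesale identity (b) should not be invoked.
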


We call $S_{eq}$ the \emph{equicontinuous structure relation} of the flow
$(X,T)$. Furthermore the factor flow $(X\big/{S_{eq}},T)$ is called the
\emph{maximal equicontinuous factor} of $(X,T)$. 

\begin{remark}
For a large class of minimal flows (which include those for which the acting
group is abelian, as well as point distal flows) the regionally proximal
relation $S_{rp}$ is an equivalence relation. Since it is always closed
invariant, it coincides with the equicontinuous structure relation $S_{eq}$ in
these cases \cite[p.141]{auslander1988minimal}.

\end{remark}
In this section will show that
$S_{rp}$ and $S_{eq}$ are monotone, whenever $(X,T)$ is a flow on a compact and
locally connected Hausdorff space. Hence, the respective factor map
$\pi_{S_{eq}}$ is monotone as well. As $S_{rp}$ is reflexive the following
Proposition shows that it is sufficient to prove that $S_{rp}$ is monotone.

\begin{proposition} \label{pro:generatedTinvclosedEqrelisalsoconnected}
	Let $(X,T)$ be a flow on a compact Hausdorff space and $S$ a reflexive
        and monotone binary relation on $X$. Let $\hat{S}$ be the smallest
        closed $T$-invariant equivalence relation that contains $S$. Then
        $\hat{S}$ is monotone.
\end{proposition}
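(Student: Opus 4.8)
The plan is to build $\hat S$ from $S$ by an explicit transfinite construction whose steps each preserve both reflexivity and monotonicity, and then to invoke Proposition~\ref{pro:monotonization_is_closed} and Proposition~\ref{pro:Uppersemicontinu_is_closed_CompactHausdorff} at the points where closures are taken. Concretely, I would first observe that the three operations that generate a closed $T$-invariant equivalence relation out of $S$ are: (a) symmetrizing and adding the diagonal, i.e. passing to $S\cup S^{-1}\cup\Delta$; (b) taking the $T$-orbit $\bigcup_{t\in T}f^t S f^{t^{-1}}$; (c) composing a relation with itself to force transitivity, i.e. $R\mapsto R\cup RR$; and (d) taking topological closure in $X\times X$. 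Then $\hat S$ is obtained by iterating (a)--(d) and taking unions at limit stages until the process stabilizes, which it must since the relations are increasing subsets of the compact set $X\times X$. So it suffices to check that each of (a)--(d), as well as increasing unions, preserves the property ``reflexive and monotone''.

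The key point for (a), (b) and increasing unions is a simple topological lemma: if $R$ is reflexive and monotone and $R'$ is reflexive and monotone, and $R\subseteq R'$ (or more generally the two agree with a common reflexive monotone relation on overlaps), then $R\cup R'$ is monotone, because $[x](R\cup R') = [x]R\cup[x]R'$ is a union of two connected sets each containing $x$, hence connected. For (b) this gives monotonicity of $\bigcup_{t\in T}f^tSf^{t^{-1}}$ once we note each $f^tSf^{t^{-1}}$ is reflexive (as $f^t$ is a homeomorphism fixing the diagonal setwise) and monotone (homeomorphic image of a connected fibre is connected), and all these relations contain $\Delta$, so the common-overlap hypothesis holds. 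For (c), reflexivity makes $R\subseteq RR$, so $R\cup RR = RR$, and one checks $[x](RR)=R[[x]R]$ is connected: it is the union, over $y\in[x]R$, of the connected sets $R[y]=[y]R$ (using reflexivity of $R$ so that $[x]R$ is among them and every $R[y]$ meets $[x]R$ at $y$), and a union of connected sets all meeting the connected set $[x]R$ is connected; symmetry of $R$ at the relevant stage makes $R[y]=[y]R$ so this is genuinely a statement about fibres.

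The main obstacle is step (d), taking closures, since the closure of a relation with connected fibres need not have connected fibres in general. This is exactly where local connectedness and compactness are needed, and where I expect the real work of the section to lie — presumably in a separate lemma proved just before this proposition, to the effect that on a compact locally connected Hausdorff space the closure of a reflexive monotone $T$-invariant relation, or at least its passage to the associated closed equivalence relation, is again monotone. I would structure the argument so that (d) is only ever applied to a reflexive $T$-invariant relation that is already ``monotone enough'', and then cite that lemma; alternatively, and more cleanly, one can take the closure only once at the very end: run (a)--(c) and the unions to produce a (possibly non-closed) $T$-invariant monotone equivalence relation $R_\infty$, let $S'$ be its closure, which is a closed $T$-invariant equivalence relation by Proposition~\ref{pro:Uppersemicontinu_is_closed_CompactHausdorff}, and then pass to the monotone refinement $S'_{mon}$, which is closed by Proposition~\ref{pro:monotonization_is_closed} and $T$-invariant because $T$ permutes connected components of fibres. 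One then argues $S'_{mon}\subseteq S'$ is still an equivalence relation containing $S$ (here monotonicity of $R_\infty$ is used to see that $S$-related points already lie in a common connected fibre component of $S'$), whence $\hat S\subseteq S'_{mon}$; but also $\hat S\supseteq S'_{mon}$ by minimality and the fact that $S'_{mon}$ refines the closed $T$-invariant equivalence relation $S'\supseteq \hat S$ — wait, that inclusion goes the wrong way, so instead one shows directly $\hat S = S'_{mon}$ by checking $S'_{mon}$ is the smallest such, using that any closed $T$-invariant equivalence relation containing $S$ contains $R_\infty$ hence $S'$ hence (being closed, so equal to its own monotone-closure in the relevant sense) contains $S'_{mon}$. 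Making this last comparison airtight — in particular verifying that a closed $T$-invariant equivalence relation containing $S$ automatically contains the monotone refinement $S'_{mon}$ rather than merely $S'$ — is the delicate bookkeeping step, and I would handle it by showing each fibre of such a relation is connected on the relevant points because it contains the connected $S$-fibres and these already connect up the component, i.e. the relation's fibres restricted to a single $S'$-class are connected.
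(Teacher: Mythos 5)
Your approach has a genuine gap, and it is exactly the one you flag yourself: the generation-from-below scheme must at some point take a topological closure, and the closure of a monotone relation need not be monotone. You propose to rescue this by local connectedness, but the proposition you are proving assumes only that $X$ is compact Hausdorff --- local connectedness enters the paper only later, when showing that the regionally proximal relation $S_{rp}$ is monotone, not here. Your fallback (``close only once at the end'') does not work as written either: you set $S'=\overline{R_\infty}$ and assert it is a closed $T$-invariant \emph{equivalence relation}, citing Proposition~\ref{pro:Uppersemicontinu_is_closed_CompactHausdorff}, but that proposition says nothing of the sort; the closure of a transitive relation in $X\times X$ need not be transitive, so $S'$ need not be an equivalence relation at all, and then the monotone refinement $S'_{mon}$ (defined via connected components of equivalence classes) is not even available. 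The final minimality comparison, which you acknowledge goes the wrong way at one point and leave as ``delicate bookkeeping'', is precisely the part that never gets resolved.

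The fix is to abandon the construction of $\hat S$ from below entirely: $\hat S$ is \emph{given} as the smallest closed $T$-invariant equivalence relation containing $S$, i.e.\ the intersection of all such relations, so it suffices to show that its monotone refinement $\hat S_{mon}$ is itself a closed $T$-invariant equivalence relation containing $S$; minimality then forces $\hat S\subseteq \hat S_{mon}\subseteq\hat S$, so $\hat S=\hat S_{mon}$ is monotone. Closedness of $\hat S_{mon}$ is exactly Proposition~\ref{pro:monotonization_is_closed} applied to the closed relation $\hat S$ (no local connectedness needed); $T$-invariance follows because each $f^t$ is a homeomorphism carrying $\hat S[x]$ onto $\hat S[f^t(x)]$ and hence carrying the component of $x$ onto the component of $f^t(x)$; and $S\subseteq\hat S_{mon}$ follows since $S[x]$ is a connected subset of $\hat S[x]$ containing $x$ (here reflexivity and monotonicity of $S$ are used), so it lies in the component $\hat S_{mon}[x]$. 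Several ingredients of this argument already appear in your sketch (the component-permutation argument for $T$-invariance, the use of connected $S$-fibres to land in a single component), but they are attached to the auxiliary objects $R_\infty$, $S'$ rather than to $\hat S$ itself, which is what creates the unfixable closure and transitivity problems.
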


\begin{proof}
	We will show that $\hat{S}$ equals its monotone refinement relation
        $\hat{S}_{mon}$. Clearly $\hat{S}_{mon}\subseteq \hat{S}$. Since
        $\hat{S}$ is the intersection over all closed and $T$-invariant
        equivalence relations $R$ such that $S \subseteq R$ it is left to show
        that $\hat{S}_{mon}$ is closed, $T$-invariant and $S \subseteq
        \hat{S}_{mon}$.

Since $\hat{S}$ is a closed equivalence relation, Proposition
\ref{pro:monotonization_is_closed} implies that $\hat{S}_{mon}$ is closed. In
order to show $\hat{S}_{mon}$ to be $T$-invariant, let $x \in X$ and $t \in
T$. Since $f^t\colon\hat{S}[x] \to f^t(\hat{S}[x])$ is a homeomorphism and
$\hat{S}$ is $T$-invariant we know that $f^t(\hat{S}_{mon}[x])$ is a connected
component of $f^t(\hat{S}[x])=\hat{S}[f^t(x)]$. Note that
$\hat{S}_{mon}[f^t(x)]$ is the connected component of $\hat{S}[f^t(x)]$ that
contains $f^t(x)$. Thus $f^t(\hat{S}_{mon}[x])=\hat{S}_{mon}[f^t(x)]$ and we
have shown $\hat{S}_{mon}$ to be $T$-invariant.

To show $S \subseteq \hat{S}_{mon}$ let $x \in X$. The reflexivity of $S$
implies $x \in S[x]\subseteq \hat{S}[x]$. Since $\hat{S}_{mon}[x]$ is the
connected component of $\hat{S}[x]$ that contains $x$ and $S[x]$ is a connected
subset of $\hat{S}[x]$ that contains $x$, it follows that $S[x]\subseteq
\hat{S}_{mon}[x]$.
\end{proof}

We now split up the proof of the fact that the regionally proximal relation
$S_{rp}$ of a flow $(X,T)$ on a locally connected and compact Hausdorff space is
monotone into several lemmas.

\begin{lemma} \label{lem:monotonerelationsI}
	Let $X$ be a topological space and $R\subseteq X\times X$ reflexive and monotone.
\begin{itemize}
	\item[(i)] If $M$ is a nonempty and connected subset of $X$, then $[M]R$ is connected.
	\item[(ii)] If $f\colon X\to X$ is a homeomorphism, then $f R f^{-1}$ is monotone. 	
	\item[(iii)] If $\left(R_\iota\right)_{\iota \in I}$ is a family of
          reflexive and monotone binary relations on $X$, then $\bigcup_{\iota
            \in I}R_\iota$ is monotone.
\end{itemize}	 
\end{lemma}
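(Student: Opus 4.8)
The plan is to prove the three parts essentially independently, using the standing hypothesis that $R$ is reflexive and monotone, i.e.\ that $[x]R$ is nonempty and connected for each $x\in X$.

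For part (i), I would write $[M]R = \bigcup_{x\in M}[x]R$ and observe that each $[x]R$ is connected (by monotonicity) and nonempty, and that $x\in[x]R$ by reflexivity. Thus for any two points $x,x'\in M$, the sets $[x]R$ and $[x']R$ both meet $M$; more precisely, since $M$ is connected and each $[x]R\supseteq\{x\}$, the family $\{[x]R : x\in M\}$ consists of connected sets whose union contains the connected set $M$, and every member of the family meets $M$. A standard fact about connectedness (if $C$ is connected and $\{A_i\}$ is a family of connected sets each meeting $C$, then $C\cup\bigcup_i A_i$ is connected) then gives that $M\cup[M]R=[M]R$ is connected, using $M\subseteq[M]R$ again by reflexivity. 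I expect this to be routine.

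For part (ii), fix $y\in X$ and consider $[y](fRf^{-1})$. Unwinding the composition of relations (as defined in the preliminaries), $[y](fRf^{-1}) = f\big([f^{-1}(y)]R\big)$: indeed $z\in[y](fRf^{-1})$ iff there exist $u,v$ with $(y,u)\in f^{-1}$, $(u,v)\in R$, $(v,z)\in f$, i.e.\ $u=f^{-1}(y)$, $v\in[u]R$, $z=f(v)$. Since $[f^{-1}(y)]R$ is connected by monotonicity of $R$ and $f$ is a homeomorphism, its image $f([f^{-1}(y)]R)$ is connected, so $fRf^{-1}$ is monotone. (I should be slightly careful with the paper's convention that $f$ is identified with $\{(f(x),x)\}\subseteq X\times X$ and with the order of composition $RS=S\circ R$, so that $fRf^{-1}$ really does act as ``first $f^{-1}$, then $R$, then $f$''; this bookkeeping is the only subtlety here.)

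For part (iii), set $R=\bigcup_{\iota\in I}R_\iota$ and fix $x\in X$. Then $[x]R=\bigcup_{\iota\in I}[x]R_\iota$. Each $[x]R_\iota$ is connected by monotonicity of $R_\iota$, and each contains $x$ by reflexivity of $R_\iota$; hence $x$ is a common point of all these connected sets, and a union of connected sets with a common point is connected. Therefore $[x]R$ is connected, so $R$ is monotone. This part is immediate. The only place that warrants genuine care is part (ii), because of the relation-composition conventions, but I do not anticipate a real obstacle; the main work in the section will come in the later lemmas where these bookkeeping facts get combined with the definition of $S_{rp}$ as an intersection of closures of orbits of neighbourhoods of the diagonal.
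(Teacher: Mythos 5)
Your proof is correct and takes essentially the same route as the paper: parts (i) and (iii) are unions of connected sets with a common point (the paper phrases (i) via the sets $[x]R\cup M$ for $x\in M$, which is the same fact), and part (ii) rests on the identity $[y](fRf^{-1})=f\bigl([f^{-1}(y)]R\bigr)$, exactly the computation the paper performs. One small bookkeeping remark: under the paper's convention that $f$ is identified with $\{(f(x),x)\}$, the triple in your unwinding of $(y,z)\in fRf^{-1}$ should read $(y,u)\in f$, $(u,v)\in R$, $(v,z)\in f^{-1}$ rather than the other way round, but the conclusions you draw ($u=f^{-1}(y)$, $z=f(v)$) and hence the final formula are exactly right, so the argument is unaffected.
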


\begin{proof}
	To show (i) let $x \in M$. The set $[x]R\cup M$ is a union of connected
        sets with nonempty intersection. Furthermore for each $x \in M$ we have
        $\emptyset \neq M \subseteq [x]R\cup M$. Thus $[M]R=\bigcup_{x \in
          M}[x]R =\bigcup_{x \in M}\left([x]R\cup M\right)$ is connected as an
        union of connected sets with nonempty intersection.

	To show (ii) let $x\in X$ and note that $[x]f=f^{-1}(\{x\})$ is
        connected as the continuous image of a connected set. Thus (i) implies
        $[x](f R)=[[x]f]R$ to be connected. As $f$ is continuous we obtain the
        image $[x](f R f^{-1})=([x](fR))f^{-1}=f([x](f R))$ to be connected.
	
	For $x\in X$ and $\iota \in I$ we obtain $x\in [x]R_\iota$ and that
        $[x]R_\iota$ is connected. Thus $[x](\bigcup_{\iota\in
          I}R_\iota)=\bigcup_{\iota\in I}([x]R_\iota)$ is connected as a union
        of connected sets with non empty intersection. This proves (iii).
\end{proof}

\begin{lemma} \label{lem:connectedasbinrelationentourages}
	Let $X$ be a locally connected and compact Hausdorff space and
        $\mathcal{U}_X$ the unique uniformity on $X$. Then every $\alpha \in
        \mathcal{U}_X$ contains a monotone $\beta\in \mathcal{U}_X$.
\end{lemma}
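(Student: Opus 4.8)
The plan is to build $\beta$ explicitly as a union of ``connected squares'' $C_x \times C_x$, one for each point $x$, chosen small enough that these squares stay inside $\alpha$.

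First I would invoke the uniformity axioms to choose a \emph{symmetric} entourage $\delta \in \mathcal{U}_X$ with $\delta \circ \delta \subseteq \alpha$ (pick any $\delta_0 \in \mathcal{U}_X$ with $\delta_0 \circ \delta_0 \subseteq \alpha$ and replace it by $\delta_0 \cap \delta_0^{-1}$). For each $x \in X$ the set $\delta[x]$ is a neighbourhood of $x$, so local connectedness of $X$ provides an open connected set $C_x$ with $x \in C_x \subseteq \delta[x]$. I then set $\beta := \bigcup_{x \in X} C_x \times C_x$. This $\beta$ is open in $X \times X$ and contains the diagonal $\Delta$ (since $(x,x) \in C_x \times C_x$), hence $\beta \in \mathcal{U}_X$. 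Moreover $\beta \subseteq \alpha$: if $(a,b) \in C_x \times C_x$ for some $x$, then $(x,a),(x,b) \in \delta$, so by symmetry $(a,x) \in \delta$, and therefore $(a,b) \in \delta \circ \delta \subseteq \alpha$.

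It then remains to see that $\beta$ is monotone. Fixing $y \in X$, one reads off directly from the definition of $\beta$ that
\[
  [y]\beta = \bigcup \{ C_x ;\, x \in X \text{ with } y \in C_x \},
\]
which is a union of connected sets each containing the point $y$, hence connected. Since $y$ was arbitrary, $[y]\beta$ is connected for every $y \in X$, i.e.\ $\beta$ is monotone, as required.

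I do not expect a genuine obstacle in this argument; the only step requiring some care is ensuring the inclusion $C_x \times C_x \subseteq \alpha$, and this is precisely why one first passes from $\alpha$ to a symmetric $\delta$ with $\delta \circ \delta \subseteq \alpha$ and only afterwards invokes local connectedness to shrink $\delta[x]$ to a connected open set.
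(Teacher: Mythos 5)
Your proposal is correct and follows essentially the same route as the paper: both construct $\beta$ as a union of squares $C_x\times C_x$ over connected neighbourhoods $C_x$ of the points $x$ and verify monotonicity by writing $[y]\beta$ as a union of connected sets all containing $y$. The only (harmless) difference is how $\beta\subseteq\alpha$ is ensured: the paper picks $U_x$ directly with $U_x\times U_x\subseteq\alpha$, using that $\alpha$ is a neighbourhood of the diagonal in $X\times X$, whereas you take the slightly longer route through a symmetric $\delta$ with $\delta\circ\delta\subseteq\alpha$.
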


\begin{proof}
Let $\alpha \in \mathcal{U}_X$. Note that $\mathcal{U}_X$ is the set of all
neighbourhoods of the diagonal $\Delta\subseteq X \times X$. Thus for $x \in X$
there is an open neighbourhood $U_x$ of $x$ with $U_x \times U_x \subseteq
\alpha$. Since $X$ is locally connected there is a connected neighbourhood $V_x$
of $x$ such that $V_x\subseteq U_x$. Let $\beta:=\bigcup_{x \in X}V_x\times
V_x$. Clearly $\beta\subseteq \alpha$ and $\beta$ is a neighbourhood of
$\Delta$, i.e. $\beta \in \mathcal{U}_x$. To show that $\beta$ is monotone let
$x \in X$ and note that
\[[x]\beta=\{z \in X; \, \exists y \in X: x,z \in V_y\}
	= \bigcup \left\{V_y;\, y \in X \text{ and } x \in V_y\right\}
\]
is a union of connected subsets of $X$ whose intersection is nonempty since it
contains $x$.
\end{proof}

The following lemma is a straightforward generalisation of the Cantor intersection theorem for countable intersections.



\begin{lemma} \label{lem:connectedintersectionofdecreasingnet}
	Let $X$ be a compact Hausdorff space and 
	$(X_\alpha)_{\alpha \in A}$ be a decreasing 
	net of nonempty and closed subsets of $X$ such that
	for each $\alpha \in A$ there is $\beta \in A$ with $\beta \geq \alpha$ and such that $X_\beta$ is connected. 
	Then $\bigcap_{\alpha \in A} X_\alpha$ is nonempty and connected.
\end{lemma}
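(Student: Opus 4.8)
The plan is to treat nonemptiness and connectedness separately, in both cases using compactness of $X$ to reduce a statement about the whole net to a statement about a single index.

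For nonemptiness, I would first note that the family $(X_\alpha)_{\alpha\in A}$ has the finite intersection property: given finitely many indices $\alpha_1,\dots,\alpha_n$, directedness of $A$ yields $\gamma\in A$ with $\gamma\geq\alpha_i$ for all $i$, and since the net is decreasing this gives $\emptyset\neq X_\gamma\subseteq\bigcap_{i=1}^{n}X_{\alpha_i}$. As the $X_\alpha$ are closed subsets of the compact space $X$, it follows that $X_\infty:=\bigcap_{\alpha\in A}X_\alpha$ is nonempty.

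For connectedness, I would argue by contradiction. Suppose $X_\infty=K_1\sqcup K_2$ with $K_1,K_2$ nonempty, disjoint and closed in $X_\infty$; since $X_\infty$ is closed in $X$, both are closed in $X$. Compact Hausdorff spaces are normal, so there are disjoint open sets $U_1\supseteq K_1$ and $U_2\supseteq K_2$ in $X$; put $U:=U_1\cup U_2\supseteq X_\infty$. The key reduction step is then: there exists $\alpha_0\in A$ with $X_{\alpha_0}\subseteq U$. To see this, consider the closed sets $X_\alpha\setminus U$, which form a decreasing net with $\bigcap_{\alpha\in A}(X_\alpha\setminus U)=X_\infty\setminus U=\emptyset$; by compactness a finite subfamily already has empty intersection, and directedness together with the decreasing property produces a single $\alpha_0$ with $X_{\alpha_0}\setminus U=\emptyset$, i.e. $X_{\alpha_0}\subseteq U$.

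Finally I would use the hypothesis to choose $\beta\geq\alpha_0$ with $X_\beta$ connected. Then $X_\infty\subseteq X_\beta\subseteq X_{\alpha_0}\subseteq U_1\cup U_2$, while $X_\beta$ meets $U_1$ (it contains $K_1\neq\emptyset$) and meets $U_2$ (it contains $K_2\neq\emptyset$); hence $X_\beta\cap U_1$ and $X_\beta\cap U_2$ are two nonempty, disjoint, relatively open subsets covering $X_\beta$, contradicting its connectedness. Therefore no such decomposition of $X_\infty$ exists, so $X_\infty$ is connected. The only mildly delicate point is the reduction from the full net to the single index $\alpha_0$; the remaining arguments are routine applications of normality and of the definition of connectedness.
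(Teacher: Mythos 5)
Your proof is correct. The paper in fact omits any proof of this lemma, remarking only that it is a straightforward generalisation of the Cantor intersection theorem; your argument --- the finite intersection property plus compactness for nonemptiness, and for connectedness the separation of a hypothetical decomposition by normality, the compactness reduction to a single index $\alpha_0$ with $X_{\alpha_0}\subseteq U_1\cup U_2$, and the choice of a connected $X_\beta$ with $\beta\geq\alpha_0$ meeting both $U_1$ and $U_2$ --- is exactly the standard argument that remark alludes to, and every step checks out.
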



Applying the previous Lemma, we obtain the following results about the closure
and the intersection of monotone equivalence relations.

\begin{lemma} \label{lem:monotonerelationsII}
	Let $X$ be a compact Hausdorff space.
\begin{itemize}
	\item[(i)] If $X$ is locally connected and $R$ is a monotone and
          reflexive relation on $X$, then the closure $\overline{R}$ of $R$ in
          $X\times X$ is also monotone.
	\item[(ii)] If $\left(R_\alpha\right)_{\alpha \in A}$ is a net of closed
          and reflexive binary relations on $X$ such that for all $\alpha \in A$
          there is $\beta \in A$ with $\alpha \leq \beta$ and such that
          $R_\beta$ is monotone, then $\bigcap_{\alpha \in A}R_\alpha$ is
          monotone.
\end{itemize} 
\end{lemma}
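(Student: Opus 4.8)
The plan is to treat (i) and (ii) by the same device: a binary relation is monotone exactly when all of its forward slices $[x]\,\cdot\,$ are connected, so in both parts I fix $x\in X$ and exhibit the forward slice at $x$ of the relation in question as the intersection of a decreasing net of nonempty closed connected subsets of $X$, whereupon Lemma~\ref{lem:connectedintersectionofdecreasingnet} finishes the job.

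Part~(ii) is the easy one. For $x\in X$ one has $[x]\bigcap_{\alpha\in A}R_\alpha=\bigcap_{\alpha\in A}[x]R_\alpha$. Each $[x]R_\alpha$ is closed, being the preimage of the closed set $R_\alpha$ under the continuous map $z\mapsto(x,z)$, and it contains $x$ by reflexivity, hence is nonempty. Thus $([x]R_\alpha)_{\alpha\in A}$ is a decreasing net of nonempty closed sets; and whenever $R_\beta$ is monotone, $[x]R_\beta$ is connected, so the net is cofinally connected in the sense of Lemma~\ref{lem:connectedintersectionofdecreasingnet}. That lemma then gives that $\bigcap_{\alpha\in A}[x]R_\alpha$ is connected, and since $x$ was arbitrary $\bigcap_{\alpha\in A}R_\alpha$ is monotone.

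For part~(i), fix $x\in X$. By Lemma~\ref{lem:connectedasbinrelationentourages} the monotone entourages form a base of the unique uniformity $\mathcal U_X$, and such an entourage $\beta=\bigcup_{y\in X}V_y\times V_y$ is automatically symmetric and reflexive; in particular the monotone entourages are directed by reverse inclusion. The standard description of the closure of a subset of a product uniform space reads $\overline R=\bigcap_\beta\beta R\beta$ with $\beta$ ranging over symmetric entourages (the two copies of $\beta$ accounting for the two coordinates of $X\times X$), and since $\beta R\beta$ only shrinks as $\beta$ does, one may intersect only over the monotone $\beta$. The first key point is that $\beta R\beta$ is itself monotone: writing $[x](\beta R\beta)=[[[x]\beta]R]\beta$, the slice $[x]\beta$ is connected (as $\beta$ is monotone) and contains $x$ (as $\beta$ is reflexive), so Lemma~\ref{lem:monotonerelationsI}(i), applied first with the reflexive monotone relation $R$ and then with $\beta$, shows $[x](\beta R\beta)$ is connected. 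Hence each $\overline{[x](\beta R\beta)}$ is a nonempty (it contains $x$) closed connected subset of $X$, and these sets decrease as $\beta$ shrinks.

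What remains --- and this is where I expect the only real difficulty --- is to identify $[x]\overline R$ with $\bigcap_\beta\overline{[x](\beta R\beta)}$; this is a genuine point because in general slicing does not commute with closure and the inclusion $\overline{[x]S}\subseteq[x]\overline S$ may be strict. The inclusion ``$\subseteq$'' is immediate from $[x]\overline R=\bigcap_\beta[x](\beta R\beta)$. For ``$\supseteq$'', given $z$ lying in every $\overline{[x](\beta R\beta)}$ and an arbitrary monotone entourage $\beta_0$, I would choose a monotone $\beta_1$ with $\beta_1\beta_1\subseteq\beta_0$, note that the neighbourhood $[z]\beta_1$ of $z$ meets $[x](\beta_1 R\beta_1)$, and chase the resulting points through the usual halving argument (with the composition conventions of Section~\ref{MonotoneMEF}) to conclude $(x,z)\in\beta_0 R\beta_0$; since $\beta_0$ was arbitrary, $(x,z)\in\overline R$. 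Granting this identity, $[x]\overline R$ is the intersection of a decreasing net of nonempty closed connected sets, so it is connected by Lemma~\ref{lem:connectedintersectionofdecreasingnet}, and as $x$ was arbitrary $\overline R$ is monotone. The moral of the obstacle is that $\overline R$ must be approached through the \emph{two-sided} fattenings $\beta R\beta$, which both converge to $\overline R$ and inherit connectedness of slices from Lemma~\ref{lem:monotonerelationsI}; the halving trick is then exactly what bridges slicing and closure.
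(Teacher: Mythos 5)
Your proof is correct, and part (ii) is verbatim the paper's argument. For part (i) your overall strategy is the paper's as well (approximate $\overline R$ by entourage--fattenings of $R$, show the slices at $x$ are connected via Lemma \ref{lem:connectedasbinrelationentourages} and Lemma \ref{lem:monotonerelationsI}(i), and finish with Lemma \ref{lem:connectedintersectionofdecreasingnet}), but you organise the approximation differently, and this is where your ``only real difficulty'' comes from. The paper works with the asymmetric fattenings $\alpha R\beta$ indexed by \emph{pairs} $(\alpha,\beta)\in\mathcal U_X^2$ and regroups the intersection: since slicing commutes with arbitrary intersections, $[x]\overline R=\bigcap_{\alpha}\bigcap_{\beta}\bigl([x]\alpha R\bigr)\beta=\bigcap_{\alpha}\overline{[x]\alpha R}$, because for a fixed set $M$ the intersection $\bigcap_{\beta\in\mathcal U_X}M\beta$ is literally $\overline M$. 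So the identification of $[x]\overline R$ with a decreasing net of closed sets is purely formal, no halving argument is needed, and only the left entourage $\alpha$ has to be replaced by a monotone one ($X_\gamma=\overline{[x]\gamma R}$ is connected since $[x]\gamma$ is connected and closures of connected sets are connected). You instead use the symmetric two-sided fattenings $\beta R\beta$ indexed by a single monotone $\beta$, which forces you to prove $\bigcap_\beta\overline{[x](\beta R\beta)}\subseteq[x]\overline R$ by hand; your sketched chase (pick monotone $\beta_1$ with $\beta_1\beta_1\subseteq\beta_0$, use that $[z]\beta_1$ meets $[x]\beta_1R\beta_1$ and the symmetry of the entourages from the construction in Lemma \ref{lem:connectedasbinrelationentourages}) does work and would close the gap, but it is extra machinery that the paper's bookkeeping renders unnecessary, and as written it is only a sketch, with the symmetry of the monotone entourages taken from the proof rather than the statement of Lemma \ref{lem:connectedasbinrelationentourages}. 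In short: same ingredients, but the paper's choice of indexing by pairs and intersecting over the right-hand entourage first is the cleaner way to bridge ``slicing'' and ``closure''.
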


\begin{proof}
	Note that $\overline{R} = \bigcap_{(\alpha,\beta)\in \mathcal{U}_X^2}
        \alpha R\beta$. For $x\in X$ we obtain \[[x]\overline{R} =
        \bigcap_{(\alpha,\beta)\in \mathcal{U}_X^2} ([x]\alpha R\beta) =
        \bigcap_{\alpha \in \mathcal{U}_X} \overline{[x]\alpha R}.\] Set
        $X_\alpha=\overline{[x]\alpha R}$ and order $\mathcal{U}_X$ by reversed
        set inclusion to obtain a decreasing net $(X_\alpha)_{\alpha\in
          \mathcal{U}_X}$ of nonempty and closed subsets of $X$. For $\alpha \in
        \mathcal{U}_X$ there is a monotone $\gamma\in \mathcal{U}_X$ such that
        $\gamma\subseteq \alpha$ by Lemma
        \ref{lem:connectedasbinrelationentourages}. Thus $[x]\gamma$ is
        connected and and we find $X_\gamma=\overline{[x]\gamma
          R}=\overline{[[x]\gamma]R}$ to be connected by Lemma
        \ref{lem:monotonerelationsI}(i). Lemma
        \ref{lem:connectedintersectionofdecreasingnet} therefore applies to
        $(X_\alpha)_{\alpha\in \mathcal{U}_X}$ and we obtain that
        $[x]\overline{R}=\bigcap_{\alpha \in \mathcal{U}_X}X_\alpha$ is
        connected. This shows (i).
	
	To see (ii) let $x\in X$ and note that $[x](\bigcap_{\alpha \in
          A}R_\alpha)=\bigcap_{\alpha \in A}([x]R_\alpha)$. Set
        $X_\alpha:=[x]R_\alpha$ for $\alpha \in A$. This defines a decreasing
        net $(X_\alpha)_{\alpha \in A}$ of non empty and closed subsets of
        $X$. For $\alpha \in A$ there is $\beta \in A$ with $\alpha \leq \beta$
        and such that $R_\beta$ is monotone. Thus $X_\beta=[x]R_\beta$ is
        connected and Lemma \ref{lem:connectedintersectionofdecreasingnet}
        yields $[x](\bigcap_{\alpha \in A}R_\alpha)=\bigcap_{\alpha \in
          A}X_\alpha$ to be connected.
\end{proof}

\begin{proposition} \label{pro:regionallyproxRelisconnected}
	Let $(X,T)$ be a flow on a compact and locally connected Hausdorff
        space. Then the regionally proximal relation $S_{rp}$ is monotone.
\end{proposition}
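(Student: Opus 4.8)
The plan is to unwind the definition $S_{rp}=\bigcap_{\alpha\in\mathcal{U}_X}\overline{\alpha T}$ and show that the expression on the right-hand side is built entirely out of operations that, by the lemmas already established, preserve monotonicity of reflexive relations. The starting point is that each entourage $\alpha\in\mathcal{U}_X$ can be replaced by a monotone one: by Lemma \ref{lem:connectedasbinrelationentourages}, the family of monotone members of $\mathcal{U}_X$ is cofinal (when $\mathcal{U}_X$ is ordered by reversed inclusion), so $S_{rp}=\bigcap\{\overline{\alpha T};\,\alpha\in\mathcal{U}_X\text{ monotone}\}$. Thus it suffices to show that for each monotone $\alpha\in\mathcal{U}_X$ the relation $\overline{\alpha T}$ is closed and monotone, and then to apply Lemma \ref{lem:monotonerelationsII}(ii) to the resulting net. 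I should also note at the outset that every relation in sight is reflexive: the diagonal $\Delta$ is contained in $\alpha$, hence in $f^t\alpha f^{(t^{-1})}$ for each $t$ (since $f^t$ is a homeomorphism fixing the diagonal set-theoretically via conjugation), hence in $\alpha T$ and in its closure; so Lemmas \ref{lem:monotonerelationsI} and \ref{lem:monotonerelationsII} apply.

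First I would fix a monotone $\alpha\in\mathcal{U}_X$ and analyze $\alpha T=\bigcup_{t\in T}f^t\alpha f^{(t^{-1})}$. Here $f^t\alpha f^{(t^{-1})}$, in the composition-of-relations notation of the preliminaries, is exactly the conjugate $f^t\alpha (f^t)^{-1}$ viewed as a subset of $X\times X$; by Lemma \ref{lem:monotonerelationsI}(ii), since $\alpha$ is monotone and reflexive and $f^t$ is a homeomorphism, each $f^t\alpha(f^t)^{-1}$ is monotone and reflexive. Then Lemma \ref{lem:monotonerelationsI}(iii), applied to the family $(f^t\alpha(f^t)^{-1})_{t\in T}$, gives that $\alpha T$ is monotone (and it is reflexive, containing $\Delta$). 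Finally Lemma \ref{lem:monotonerelationsII}(i) — this is where local connectedness of $X$ is used a second time — shows that the closure $\overline{\alpha T}$ is monotone; it is of course closed and reflexive.

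To finish, I would order $\mathcal{U}_X$ by reversed inclusion and consider the net $(\overline{\alpha T})_{\alpha\in\mathcal{U}_X}$ of closed reflexive relations on $X$. For every $\alpha\in\mathcal{U}_X$ there is, by Lemma \ref{lem:connectedasbinrelationentourages}, a monotone $\beta\in\mathcal{U}_X$ with $\beta\subseteq\alpha$, i.e.\ $\beta\geq\alpha$ in the reversed order, and by the previous paragraph $\overline{\beta T}$ is monotone. Hence the hypotheses of Lemma \ref{lem:monotonerelationsII}(ii) are met, and we conclude that $S_{rp}=\bigcap_{\alpha\in\mathcal{U}_X}\overline{\alpha T}$ is monotone.

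The only genuinely delicate point is bookkeeping: one must be careful that the notation $f^t\alpha f^{(t^{-1})}$ really is the homeomorphic conjugate to which Lemma \ref{lem:monotonerelationsI}(ii) applies — i.e.\ that composing relations in the paper's convention matches $f^t\circ(-)\circ(f^t)^{-1}$ — and that reflexivity genuinely propagates through each step so that the monotonicity lemmas (all stated for reflexive relations) are applicable. Beyond that the argument is a direct chain of citations: Lemma \ref{lem:connectedasbinrelationentourages} to make entourages monotone, Lemma \ref{lem:monotonerelationsI}(ii)--(iii) to handle conjugation and union, Lemma \ref{lem:monotonerelationsII}(i) to handle the closure, and Lemma \ref{lem:monotonerelationsII}(ii) to handle the intersection over the cofinal net. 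I expect no further obstacle; the substantive work has been front-loaded into the lemmas.
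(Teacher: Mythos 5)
Your proposal is correct and takes essentially the same route as the paper's own proof: fix an arbitrary entourage, replace it by a monotone one via Lemma \ref{lem:connectedasbinrelationentourages}, use Lemma \ref{lem:monotonerelationsI}(ii)--(iii) and Lemma \ref{lem:monotonerelationsII}(i) to see that $\overline{\beta T}$ is monotone, and conclude with Lemma \ref{lem:monotonerelationsII}(ii) applied to the net $(\overline{\alpha T})_{\alpha\in\mathcal{U}_X}$ ordered by reversed inclusion. Your additional remarks on reflexivity and on the composition convention for $f^t\alpha f^{(t^{-1})}$ are accurate and only make explicit what the paper leaves implicit.
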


\begin{proof}
	Note that $S_{rp}=\bigcap_{\alpha \in \mathcal{U}_X}\overline{\alpha
          T}$. Denote $R_\alpha=\overline{\alpha T}=\overline{\bigcup_{t\in
            T}f^t \alpha f^{(t^{-1})}}$ for $\alpha \in \mathcal{U}_X$ and order
        $\mathcal{U}_X$ by reversed inclusion to obtain a decreasing net
        $(R_\alpha)_{\alpha \in \mathcal{U}_X}$ of closed and reflexive binary
        relations on $X$. For $\alpha \in \mathcal{U}_X$ there is a monotone
        $\beta \in \mathcal{U}_X$ such that $\beta \subseteq \alpha$.  For $t\in
        T$ we know $f^t$ to be a homeomorphism. Lemma
        \ref{lem:monotonerelationsI}(ii) therefore implies $f^t \beta
        f^{(t^{-1})}$ to be monotone. Note that $f^t\beta f^{(t^{-1})}$ is
        reflexive. Lemma \ref{lem:monotonerelationsI}(iii) and Lemma
        \ref{lem:monotonerelationsII}(i) therefore yield
        $R_\beta=\overline{\bigcup_{t\in T}f^t \beta f^{(t^{-1})}}$ to be
        monotone. We have shown that for $\alpha \in \mathcal{U}_X$ there is
        $\beta \in \mathcal{U}_X$ such that $\beta \subseteq \alpha$ and such
        that $R_\beta$ is monotone. Thus Lemma \ref{lem:monotonerelationsII}(ii)
        can be applied and shows $S_{rp}$ to be monotone.
\end{proof}

Combining Proposition \ref{pro:generatedTinvclosedEqrelisalsoconnected} and	Proposition \ref{pro:regionallyproxRelisconnected} we obtain the following Theorem.  

\begin{theorem} \label{thm:SeqoncompactandlocallyconnectedHSpaceisMonotone}
	The maximal equicontinuous factor $(X\big/S_{eq},T)$ of a flow $(X,T)$ on a compact and locally connected Hausdorff space has a monotone factor map, i.e.\ the equicontinuous structure relation $S_{eq}$ is monotone.
\end{theorem}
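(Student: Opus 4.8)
The plan is to combine the two main results already in hand, namely Proposition \ref{pro:regionallyproxRelisconnected} and Proposition \ref{pro:generatedTinvclosedEqrelisalsoconnected}, so that the remaining work amounts to checking that their hypotheses apply to $S_{rp}$ and $S_{eq}$. First I would record that the regionally proximal relation $S_{rp}=\bigcap_{\alpha\in\mathcal{U}_X}\overline{\alpha T}$ is reflexive: each $\alpha\in\mathcal{U}_X$ is a neighbourhood of the diagonal $\Delta$, and since $f^t\Delta f^{(t^{-1})}=\Delta$ for every $t\in T$, the set $\alpha T=\bigcup_{t\in T}f^t\alpha f^{(t^{-1})}$ contains $\Delta$; passing to closures and intersecting over $\alpha$ preserves this, so $\Delta\subseteq S_{rp}$. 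This reflexivity is exactly what is needed to feed $S_{rp}$ into Proposition \ref{pro:generatedTinvclosedEqrelisalsoconnected}.

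Next, since $X$ is by assumption compact, locally connected and Hausdorff, Proposition \ref{pro:regionallyproxRelisconnected} applies directly and gives that $S_{rp}$ is monotone. By the quoted theorem of Auslander (\cite[Chapter 9, Theorems 1 and 3]{auslander1988minimal}), the equicontinuous structure relation $S_{eq}$ is the smallest closed and $T$-invariant equivalence relation containing $S_{rp}$. Hence, applying Proposition \ref{pro:generatedTinvclosedEqrelisalsoconnected} with $S=S_{rp}$ (reflexive and monotone) and $\hat S=S_{eq}$, we conclude that $S_{eq}$ is monotone.

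Finally I would translate this into the statement about the factor map by invoking the observation made in the preliminaries that an equivalence relation $S$ on a topological space is monotone if and only if $\pi_S$ is monotone, because $\pi_S^{-1}(\{x\})=S[x]=[x]S$ for every $x$. Applied to $S=S_{eq}$, this shows that the factor map $\pi_{S_{eq}}\colon X\to X\big/S_{eq}$ onto the maximal equicontinuous factor is monotone, which is the assertion. I do not expect a genuine obstacle in this final step: the substantive content has been carried by Propositions \ref{pro:regionallyproxRelisconnected} and \ref{pro:generatedTinvclosedEqrelisalsoconnected} (and, further upstream, by the Cantor-type Lemma \ref{lem:connectedintersectionofdecreasingnet} and the local-connectedness Lemma \ref{lem:connectedasbinrelationentourages}); the only point that requires a moment's care is the reflexivity of $S_{rp}$, without which Proposition \ref{pro:generatedTinvclosedEqrelisalsoconnected} could not be invoked.
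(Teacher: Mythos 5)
Your proposal is correct and follows essentially the same route as the paper: the paper also obtains the theorem by combining Proposition \ref{pro:generatedTinvclosedEqrelisalsoconnected} with Proposition \ref{pro:regionallyproxRelisconnected}, using Auslander's characterisation of $S_{eq}$ as the smallest closed $T$-invariant equivalence relation containing the reflexive relation $S_{rp}$. Your explicit check that $\Delta\subseteq S_{rp}$ is a small but welcome addition, since the paper only asserts this reflexivity in passing.
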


\section{On monotone factors of flows on the torus}

Our main application of
Theorem~\ref{thm:SeqoncompactandlocallyconnectedHSpaceisMonotone} is the following. 

\begin{theorem} \label{the:MinimalMEFisT2S1orpt}
If the maximal equicontinuous factor of a homeomorphism of $\mathbb{T}^2$ is
minimal, then it is either conjugate to a flow on $\mathbb{T}^2$, conjugate to
an irrational circle rotation, or conjugate to a flow on a single point.
\end{theorem}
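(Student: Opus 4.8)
Write $Y := \mathbb{T}^2/S_{eq}$ and $\pi := \pi_{S_{eq}}\colon \mathbb{T}^2 \to Y$ for the maximal equicontinuous factor of $(\mathbb{T}^2,f)$ and its factor map. Since $\mathbb{T}^2$ is a compact, locally connected Hausdorff space, Theorem~\ref{thm:SeqoncompactandlocallyconnectedHSpaceisMonotone} shows that $\pi$ is monotone, so that all its fibres are non-empty and connected; moreover $\pi$ is a closed continuous surjection (a continuous surjection of a compact space onto a Hausdorff space), and $Y$ is compact Hausdorff. By hypothesis $(Y,f)$ is minimal and equicontinuous, so the structure theory of equicontinuous minimal flows (see \cite{auslander1988minimal}) lets us identify $(Y,f)$ with a rotation $y \mapsto y + g_0$ on a compact monothetic --- hence abelian --- group $Y = \overline{\langle g_0\rangle}$, the minimality being equivalent to $g_0$ generating a dense subgroup. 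It therefore remains to decide which compact connected abelian groups admit a monotone continuous surjection from $\mathbb{T}^2$.

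The heart of the argument is cohomological. Applying the Vietoris--Begle mapping theorem to the closed surjection $\pi$ --- whose fibres, being non-empty and connected, have trivial reduced \v{C}ech cohomology in degree $0$ --- yields that the induced map $\pi^{*}\colon \check H^{1}(Y;\mathbb{Z}) \to \check H^{1}(\mathbb{T}^2;\mathbb{Z})$ is injective. Since $\check H^{1}(\mathbb{T}^2;\mathbb{Z}) \cong \mathbb{Z}^{2}$, and since for a compact connected abelian group $Y$ the first \v{C}ech cohomology group with integer coefficients is canonically isomorphic to the Pontryagin dual, $\check H^{1}(Y;\mathbb{Z}) \cong \widehat{Y}$, we conclude that $\widehat{Y}$ embeds as an abelian group into $\mathbb{Z}^{2}$. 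Consequently $\widehat{Y}$ is a finitely generated torsion-free abelian group, that is $\widehat{Y} \cong \mathbb{Z}^{k}$ for some $k\in\{0,1,2\}$, and Pontryagin duality gives $Y \cong \widehat{\mathbb{Z}^{k}} = \mathbb{T}^{k}$. Note that a mere bound on the (covering or cohomological) dimension of $Y$ would not suffice here: the one-dimensional solenoids are minimal equicontinuous compact abelian groups, and what excludes them --- and, more generally, all infinite-dimensional tori --- is precisely the injectivity of $\pi^{*}$ in degree $1$, which forces $\widehat{Y}$ to be finitely generated rather than merely of finite rank.

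Finally one reads off the three cases. If $k=0$ then $Y$ is a single point and $f$ is the identity on it. If $k=1$ then $Y=\mathbb{T}^1$ and $y\mapsto y+g_0$ is a minimal, hence irrational, circle rotation. If $k=2$ then $Y=\mathbb{T}^2$ and $y\mapsto y+g_0$ is a minimal translation, i.e.\ a translation by a totally irrational vector, which is what is meant by an irrational translation of the two-torus; in each case the conclusion of the theorem follows.

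The one genuinely non-elementary input, and the step I expect to require the most care, is the implication ``monotone $\Rightarrow$ injective on first cohomology'': it is essential to work with \v{C}ech rather than singular cohomology and to use that $\pi$ is a closed map, so that the Vietoris--Begle theorem genuinely applies to the a priori possibly complicated space $Y$. Once this, together with the identification $\check H^{1}(Y;\mathbb{Z})\cong\widehat{Y}$, is in place, the remaining algebra and the case analysis are routine.
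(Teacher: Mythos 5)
Your proposal is correct, but after the shared first step (invoking Theorem~\ref{thm:SeqoncompactandlocallyconnectedHSpaceisMonotone} to get monotonicity of $\pi_{S_{eq}}$) it diverges completely from the paper's argument. The paper never uses the group structure of the equicontinuous factor: it proves the purely topological Theorem~\ref{the:characterizationofminimalfactorsofflowsonTorus}, classifying \emph{all} monotone minimal factors of a strongly effective toral flow via the trichotomy inessential/essential/doubly essential for the fibres, Moore's theorem (Theorem~\ref{the:Moorestheorem}), the circle criterion of Proposition~\ref{pro:kleinerMoore}, and a Brouwer-type fixed point argument (Lemma~\ref{lem:Brouwerrevisited}) to exclude separating fibres; the effectiveness issues are then handled by Lemmas~\ref{lem:factorminimalandeffecitveimpliesextensiontobestronglyeffective} and \ref{lem:nontrivialmonotoneminimaldiscretefactoriseffective} before specializing to the MEF. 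You instead exploit equicontinuity at the outset to identify the factor with a minimal rotation on a compact connected monothetic (hence abelian) group $Y$, and then pin down $Y$ cohomologically: the Vietoris--Begle theorem (applicable since $\pi$ is a closed surjection of compact Hausdorff spaces with nonempty connected, hence taut, fibres) gives injectivity of $\pi^*$ on $\check H^1$, and the identification $\check H^1(Y;\mathbb{Z})\cong\widehat Y$ forces $\widehat Y$ to embed in $\mathbb{Z}^2$, so $\widehat Y\cong\mathbb{Z}^k$ and $Y\cong\mathbb{T}^k$ with $k\le 2$; your remark that this, rather than a dimension bound, is what rules out solenoids is exactly right. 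Your route is shorter modulo heavier inputs (Vietoris--Begle for \v{C}ech/Alexander--Spanier cohomology and the computation of $\check H^1$ of compact connected abelian groups, for which you should cite a source such as Hofmann--Morris), and it bypasses the fixed-point and effectiveness analysis entirely; what it buys less of is generality and geometric information: the paper's Theorem~\ref{the:characterizationofminimalfactorsofflowsonTorus} and Corollary~\ref{cor:monotoneMinimalfactorofDiscreteflowonTorus} classify arbitrary monotone minimal factors (not only equicontinuous ones, where no group structure is available and your duality argument does not apply), and the paper's proof additionally describes the fibres (all inessential, or all essential annular continua, or the whole torus), which is used in the later corollaries.
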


\subsection{Notions and preliminaries}

A flow $(X,T)$ is called \emph{effective}, if for all $t\in T$ not being the
identity element the mapping $f^t$ is not the identity on $X$. A flow $(X,T)$ is
said to be \emph{strongly effective}, if for all $t\in T$ not being the identity
element the mapping $f^t$ is fix point free. Note that every strongly effective
flow is effective and that the reverse is valid for minimal flows, as shown in
\cite[Chapter 1]{auslander1988minimal}.

If $(X,T)$ is a flow, then the set $F$ of all $t\in T$ such that $f^t$ is the identity on $X$ is a closed and normal subgroup. Thus $G:=T\big/F$ defines an action on $X$ by $f^{[t]}(x):=f^t(x)$ for $[t]\in G$ and $x\in X$. The resulting flow $(X,G)$ is effective and $G$ is called the \emph{effective subgroup}. 

If $(X,T)$ is a flow and $(Y,T)$ is a factor with the factor map $\pi\colon X
\to Y$, then $(Y,T)$ is said to be \emph{relatively effective} with respect to
$(X,T)$, if for all $t\in T$ we have that $f^t$ is the identity on $X$, whenever
$f^t$ is the identity on $Y$. Note that $(Y,T)$ is relatively effective with
respect to $(X,T)$ if and only if the effective subgroups of $(X,T)$ and $(Y,T)$
coincide, i.e.\ $t\in T$ acts as the identity on $X$, whenever $t$ acts as the
identity on $Y$. Thus if $(Y,T)$ is relatively effective with respect to $(X,T)$
these flows can be considered to be effective simultaneously, by going over to
the effective subgroup of $T$ if necessary. Note furthermore, that every
effective factor of a flow is relatively effective.

\begin{lemma} \label{lem:factorminimalandeffecitveimpliesextensiontobestronglyeffective}
	If $(X\big/S,T)$ is a strongly effective factor of a flow $(X,T)$, then $(X,T)$ is strongly effective. 
\end{lemma}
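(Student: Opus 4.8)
The plan is to argue by contraposition, using only the definition of strong effectiveness together with the fact that a factor map semiconjugates the two actions. Suppose that $(X,T)$ is \emph{not} strongly effective. Then there exist some $t\in T$ with $t\neq e$ and some point $x\in X$ such that $f^t(x)=x$. The goal is to manufacture a fixed point of the corresponding map on the factor $X\big/S$, which will contradict the assumption that $(X\big/S,T)$ is strongly effective.

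To this end, recall from the preliminaries that since $S$ is a closed $T$-invariant equivalence relation, the action of $t\in T$ on the quotient $X\big/S$ is well defined on equivalence classes by $S[x]\mapsto S[f^t(x)]$; in other words, the factor map $\pi_S$ conjugates the map $f^t$ on $X$ to the map $f^t$ on $X\big/S$. Applying this with the fixed point $x$ from above yields that $t$ sends $S[x]$ to $S[f^t(x)]=S[x]$, using $f^t(x)=x$. Hence the point $S[x]\in X\big/S$ is fixed by the action of $t$.

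Since $(X\big/S,T)$ is strongly effective and the map induced by $t$ has a fixed point, the definition of strong effectiveness forces $t=e$, contradicting the choice $t\neq e$. Therefore no such $t$ and $x$ can exist, and $(X,T)$ is strongly effective.

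The only point requiring any attention is the first sentence of the second paragraph, namely that the induced action on $X\big/S$ is well defined and that $\pi_S$ genuinely intertwines the two actions, i.e.\ $\pi_S\circ f^t$ and $f^t\circ\pi_S$ agree as maps from $X$ to $X\big/S$. Both facts follow immediately from the $T$-invariance of $S$ and were already recorded when factors of a flow were introduced, so there is no real obstacle: the lemma is essentially a reformulation of the definitions via the factor map.
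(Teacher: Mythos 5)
Your proof is correct and is essentially the paper's own argument, spelled out in more detail: the paper also simply observes that a fixed point $x$ of $f^t$ in $X$ projects to the fixed point $S[x]$ of $f^t$ in the factor, contradicting strong effectiveness there. No further comments are needed.
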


\begin{proof}
	If $t\in T$ and $x$ is a fix point of $f^t$ in $X$, then  $S[x]$ is a fix point of $f^t$ in $Y$. 
\end{proof}

During this section denote by $\mathbb{T}^2=\R^2/\mathbb{Z}^2$ the two-torus and by $p\colon
\mathbb{R}^2\to \mathbb{T}^2$ the canonical projection. A subset $M\subseteq
\mathbb{T}^2$ is called \emph{bounded}, if every connected component of $p^{-1}(M)$ is
bounded. Furthermore a subset $M\subseteq \mathbb{T}^2$ is called
\emph{unbounded}, if $M$ is not bounded. An open subset $U\subseteq
\mathbb{T}^2$ is called \emph{doubly-essential}, if $U$ contains two
homotopically non trivial curves which are not homotopic to each other. An open
subset $U\subseteq \mathbb{T}^2$ is called \emph{inessential}, if all curves in
$U$ are homotopically trivial. Furthermore an open subset $U\subseteq
\mathbb{T}^2$ is called \emph{essential}, if $U$ is not doubly essential and not
inessential. Note that an open subset $U\subseteq\mathbb{T}^2$ is essential, if
and only if $U$ contains a homotopically non trivial curve $\gamma$ and every
other curve is either homotopically trivial or homotopic to $\gamma$. A closed
subset $M\subseteq \mathbb{T}^2$ is called \emph{inessential}, if
$\mathbb{T}^2\setminus M$ is doubly essential; \emph{essential}, if
$\mathbb{T}^2\setminus M$ is essential; and \emph{doubly essential}, if
$\mathbb{T}^2\setminus M$ is inessential. It is well-known that if $M$ is a
continuum\footnote{A \emph{continuum} is a compact and connected topological
  space.}, then $M$ is bounded if and only if $M$ is inessential.

A subset $M\subseteq \mathbb{T}^2$ is called a \emph{topological disc}, if it is
homeomorphic to the open disc $D^\circ$, and a \emph{topological annulus}, if it
is homeomorphic to the open annulus $\mathbb{R}\times S^1$. Furthermore a
continuum $M\subseteq \mathbb{T}^2$ is called an \emph{annular continuum}, if
its complement $\mathbb{T}^2\setminus M$ is a topological annulus. It is a well
known fact that a continuum $M$ is an annular continuum if and only if $M$ is
essential and non-separating\footnote{A subset $M$ of a topological space $X$ is
  caled \emph{non-separating}, if $X\setminus M$ has at most one connected
  component.} and that every open and simply connected subset $U\subseteq
\mathbb{T}^2$ is a topological disc (as a consequence of the Riemann Mapping
Theorem). The following version of the classical Moore Theorem can be found in
\cite[Section 3]{TobiasClassofminimalsetsoftorusHomos}. See also
\cite{Daverman}.

\begin{theorem}(Moore)\label{the:Moorestheorem}
	If $S$ an upper semi-continuous and monotone equivalence relation on the
        torus $\mathbb{T}^2$ such that each $M\in X\big/S$ is non separating and
        inessential, then $\mathbb{T}^2\big/S$ is homeomorphic to
        $\mathbb{T}^2$.
\end{theorem}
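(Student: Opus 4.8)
The plan is to lift the decomposition to the universal cover $\mathbb{R}^2$, apply the classical planar (equivalently spherical) version of Moore's theorem there, and then descend along the covering $p\colon\mathbb{R}^2\to\mathbb{T}^2$ using the $\mathbb{Z}^2$-action by deck transformations. First I would record the standing consequences of the hypotheses: since $S$ is upper semicontinuous and monotone, every $M\in\mathbb{T}^2\big/S$ is a continuum, and being inessential it is bounded, so each connected component of $p^{-1}(M)$ is a bounded subcontinuum of $\mathbb{R}^2$; moreover the quotient $\mathbb{T}^2\big/S$ is already known to be compact Hausdorff by Proposition~\ref{pro:Uppersemicontinu_is_closed_CompactHausdorff}. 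The goal is thus to identify this quotient surface with $\mathbb{T}^2$.

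Next I would build the lifted decomposition $\tilde S$ of $\mathbb{R}^2$ whose classes are exactly the connected components of the sets $p^{-1}(M)$, $M\in\mathbb{T}^2\big/S$. This partition is invariant under the deck group $\mathbb{Z}^2$, and since $\mathbb{Z}^2$ acts transitively on the components of $p^{-1}(M)$ (the covering being regular with $M$ connected), two classes of $\tilde S$ project to the same class of $S$ if and only if they differ by a lattice translation; hence $\mathbb{T}^2\big/S=(\mathbb{R}^2\big/\tilde S)\big/\mathbb{Z}^2$. The two properties of $\tilde S$ that must be checked are that it is upper semicontinuous and that each class is a non-separating continuum in $\mathbb{R}^2$. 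Upper semicontinuity I would deduce from that of $S$ together with equivariance and the boundedness of the classes, which yields a uniform bound on their diameters and prevents accumulation; non-separation of each lifted class I would obtain from the two torus hypotheses jointly, since if a bounded lifted component enclosed a complementary region, projecting that region to $\mathbb{T}^2$ would exhibit $M$ as either separating or essential, contrary to assumption.

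With $\tilde S$ in hand I would invoke Moore's theorem in its classical form on the sphere. One-point compactify $\mathbb{R}^2$ to $S^2=\mathbb{R}^2\cup\{\infty\}$ and adjoin the singleton $\{\infty\}$ to obtain an extended decomposition $\tilde S^+$; the uniform diameter bound ensures that the classes do not accumulate at $\infty$, so $\tilde S^+$ is an upper semicontinuous decomposition of $S^2$ into non-separating continua. The sphere version of Moore's theorem then gives $S^2\big/\tilde S^+\cong S^2$, and deleting the image of $\infty$ yields $\mathbb{R}^2\big/\tilde S\cong\mathbb{R}^2$.

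Finally I would descend. The deck action passes to an action of $\mathbb{Z}^2$ on $\mathbb{R}^2\big/\tilde S\cong\mathbb{R}^2$ by homeomorphisms; it is properly discontinuous and cocompact because it was so on $\mathbb{R}^2$, and it is free because a nonzero lattice translation cannot fix a bounded class (that would force the class to be unbounded). Consequently $\mathbb{T}^2\big/S=(\mathbb{R}^2\big/\tilde S)\big/\mathbb{Z}^2$ is a closed surface with universal cover $\mathbb{R}^2$ and fundamental group $\mathbb{Z}^2$, which by the classification of closed surfaces is homeomorphic to $\mathbb{T}^2$. The main obstacle I anticipate lies in the second step: rigorously verifying that the lifted decomposition is upper semicontinuous with non-separating classes, that is, translating the torus conditions ``non-separating'' and ``inessential'' into the planar cellularity condition and controlling the classes uniformly at infinity. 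Once this is secured, the sphere version of Moore's theorem and the covering-space descent are routine.
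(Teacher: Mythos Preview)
The paper does not prove Theorem~\ref{the:Moorestheorem}; it is quoted as a known result, with a reference to \cite[Section 3]{TobiasClassofminimalsetsoftorusHomos} and \cite{Daverman} given immediately before the statement. Hence there is no in-paper proof to compare against.

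That said, your outline is essentially the standard way of deriving the toral statement from the classical spherical Moore theorem, and the overall strategy is sound: lift the decomposition to $\mathbb{R}^2$, compactify and apply Moore on $S^2$, then quotient by the induced free, properly discontinuous $\mathbb{Z}^2$-action and invoke the classification of closed surfaces. Two points deserve more care than your sketch gives them. First, the uniform diameter bound on the lifted classes is not a consequence of boundedness alone; it comes from compactness of $\mathbb{T}^2\big/S$ together with upper semicontinuity (a sequence of classes whose lifts have diverging diameters would subconverge in $\mathbb{T}^2\big/S$ to a class whose lift components would have to be unbounded, contradicting inessentiality). Second, for the non-separation of a lifted component $\tilde M$, the cleanest route is to use that an inessential continuum $M$ lies in an embedded disk $D\subseteq\mathbb{T}^2$; lifting $D$ homeomorphically isolates $\tilde M$ from its translates, and any bounded complementary region of $\tilde M$ then projects to a proper clopen subset of $\mathbb{T}^2\setminus M$, contradicting that $M$ is non-separating. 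You already flagged this step as the main obstacle, and rightly so; once it is handled, the compactification and descent are routine.
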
 

If we assume $S$ to be an upper semi-continuous and monotone equivalence
relation, we obtain $X\big/S$ to be a compact and connected metric space. If
each $M\in \mathbb{T}^2\big/ S$ is non separating and essential, i.e.\ an
annular continuum, then the connected components of $\mathbb{T}^2\setminus (M\cup N)$ are
two topological annuli, whenever $M,N\in \mathbb{T}^2\big/S$ with $M\neq
N$. Thus $(\mathbb{T}^2\big/S)\setminus\{M,N\}$ has two connected components. In
\cite[Theorem 2-28]{YoungsTopology} it is shown that a compact and connected
metric space $X$ is homeomorphic to $S^1$ if and only if for all points $x,y\in
X$ with $x\neq y$ the complement $X\setminus \{x,y\}$ has exactly two connected
components. This proves the following proposition.

\begin{proposition}\label{pro:kleinerMoore}
	If $S$ an upper semi-continuous and monotone equivalence relation on the
        Torus $\mathbb{T}^2$ such that each $M\in X\big/S$ is non separating and
        essential, then $\mathbb{T}^2\big/S$ is homeomorphic to $S^1$.
\end{proposition}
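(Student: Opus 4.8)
The strategy is to verify the hypotheses of the topological characterization of the circle quoted from \cite[Theorem 2-28]{YoungsTopology}: a compact connected metric space $Y$ is homeomorphic to $S^1$ exactly when $Y\setminus\{a,b\}$ has precisely two connected components for every pair of distinct points $a,b\in Y$. Here the candidate is $Y=\mathbb{T}^2\big/S$. As recorded in the discussion preceding the proposition (and using that upper semicontinuity of $S$ implies closedness by Proposition~\ref{pro:Uppersemicontinu_is_closed_CompactHausdorff}), $Y$ is a compact, connected, metric space, so it only remains to count the connected components of $Y$ after the removal of two arbitrary points.

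Fix distinct points $a=\pi_S(M)$ and $b=\pi_S(N)$ of $Y$, where $M\neq N$ are classes of $S$, hence disjoint. Each class is compact (by upper semicontinuity) and connected (by monotonicity), and by hypothesis non-separating and essential, so by the fact recalled just before the proposition $M$ and $N$ are annular continua. The geometric heart of the proof is the claim that $\mathbb{T}^2\setminus(M\cup N)$ has exactly two connected components $U_1,U_2$, each a topological annulus. Granting this, the remainder is bookkeeping. The classes of $S$ partition $\mathbb{T}^2$, and every class $P\notin\{M,N\}$ is connected and disjoint from $M\cup N$, hence contained in one of $U_1,U_2$; therefore $U_1$ and $U_2$ are $S$-saturated open sets. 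Consequently $\pi_S(U_1)$ and $\pi_S(U_2)$ are open (images of saturated open sets under the quotient map $\pi_S$), connected, nonempty, and disjoint (their preimages are the disjoint saturated sets $U_1,U_2$), and their union is $\pi_S(\mathbb{T}^2\setminus(M\cup N))=Y\setminus\{a,b\}$. Since each of $\pi_S(U_1),\pi_S(U_2)$ is then clopen in $Y\setminus\{a,b\}$, these are precisely its two connected components. As $a,b$ were arbitrary, \cite[Theorem 2-28]{YoungsTopology} applies and gives $Y=\mathbb{T}^2\big/S\cong S^1$.

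The main obstacle is the planar/annular topology claim that $\mathbb{T}^2\setminus(M\cup N)$ splits into exactly two topological annuli when $M$ and $N$ are disjoint annular continua. One route: write $A:=\mathbb{T}^2\setminus M$, an open annulus since $M$ is annular; then $N$ is a continuum in $A$ which is non-separating and essential in $\mathbb{T}^2$, and $\mathbb{T}^2\setminus(M\cup N)=A\setminus N$. One then argues that an essential continuum in an open annulus separates it into exactly two components, each again an open annulus — a standard separation statement that can be obtained by lifting to the cyclic (or universal) cover and invoking the Jordan curve theorem, or cited from the literature on essential continua in surfaces. Everything else — metrizability of the quotient, the saturation of $U_1,U_2$, and the final appeal to Youngs' characterization — is routine once this fact is available.
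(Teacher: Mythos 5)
Your proof follows essentially the same route as the paper: both reduce the statement to the Hocking--Young characterization of the circle via the fact that the complement of two disjoint annular continua in $\mathbb{T}^2$ consists of exactly two topological annuli, which the paper likewise invokes as a known fact. Your additional bookkeeping with the $S$-saturated open sets $U_1,U_2$ just makes explicit a step the paper leaves implicit, so the argument is correct and matches the paper's proof.
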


The following version of Brouwers fix point theorem will be useful. 

\begin{lemma}\label{lem:Brouwerrevisited}
	If $U\subseteq \mathbb{T}^2$ is a topological disc and $f\colon
        \mathbb{T}^2\to \mathbb{T}^2$ a homeomorphism such that
        $f(\overline{U})\subseteq U$, then $f$ has a fixed point.
\end{lemma}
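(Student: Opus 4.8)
The plan is to reduce the statement to the classical Brouwer fixed point theorem for a closed Euclidean disc. The one genuine obstacle is that $\overline{U}$ itself need not be homeomorphic to a closed disc — its topological boundary in $\mathbb{T}^2$ may be wild (e.g.\ of Lakes-of-Wada type) — so Brouwer's theorem cannot be applied to $f|_{\overline{U}}$ directly. Instead I would work in the coordinates furnished by a homeomorphism of $U$ with the plane, where the compactness of $f(\overline{U})$ translates into a large invariant closed ball.

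First I would use that $U$ is a topological disc and that the open disc $D^\circ$ is homeomorphic to $\mathbb{R}^2$ to fix a homeomorphism $h\colon U \to \mathbb{R}^2$. Since $\overline{U}$ is closed in the compact space $\mathbb{T}^2$ it is compact, so $K:=f(\overline{U})$ is compact, and by hypothesis $K\subseteq U$. Hence $h(K)$ is a compact, and therefore bounded, subset of $\mathbb{R}^2$, so it lies in some closed ball $B=\{v\in\mathbb{R}^2:\ |v|\le R\}$.

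Then I would define $g:=h\circ f\circ h^{-1}\colon \mathbb{R}^2\to\mathbb{R}^2$. This is well defined and continuous: for $v\in\mathbb{R}^2$ we have $h^{-1}(v)\in U\subseteq\overline{U}$, hence $f(h^{-1}(v))\in f(\overline{U})=K\subseteq U$, which is the domain of $h$. Moreover $g(\mathbb{R}^2)=h(f(U))\subseteq h(f(\overline{U}))=h(K)\subseteq B$, so in particular $g(B)\subseteq B$. Applying Brouwer's fixed point theorem to the self-map $g|_B$ of the closed ball $B$ yields a point $v_0\in B$ with $g(v_0)=v_0$.

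Finally, unwinding the conjugacy gives $h\bigl(f(h^{-1}(v_0))\bigr)=v_0=h\bigl(h^{-1}(v_0)\bigr)$, and since $h$ is injective, $f(h^{-1}(v_0))=h^{-1}(v_0)$; thus $x_0:=h^{-1}(v_0)\in U$ is the desired fixed point of $f$. The hard part, as noted, is precisely the temptation to apply Brouwer to $\overline{U}$, which is circumvented by transporting the dynamics to $\mathbb{R}^2$; note also that only the continuity of $f$, not that it is a homeomorphism, enters the argument.
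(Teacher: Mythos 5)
Your argument is correct and complete. Note that the paper states Lemma \ref{lem:Brouwerrevisited} without proof (it is offered as a known variant of Brouwer's theorem), so there is no in-text proof to compare against; your write-up supplies exactly the standard reduction one would expect. The key observations all check out: $f(\overline{U})$ is a compact subset of $U$ by hypothesis, so after conjugating by a homeomorphism $h\colon U\to\mathbb{R}^2$ the induced map $g=h\circ f\circ h^{-1}$ is a continuous self-map of $\mathbb{R}^2$ whose image lies in a closed ball $B$, and Brouwer applied to $g|_B$ produces a fixed point, which pulls back to a fixed point of $f$ in $U$; you are also right that only continuity of $f$ is used. An equivalent, slightly more intrinsic phrasing of the same idea: choose, in the $h$-coordinates, a closed round disc $D\subseteq U$ containing $f(\overline{U})$; then $f(D)\subseteq f(\overline{U})\subseteq D$ and Brouwer applies directly to $f|_D\colon D\to D$. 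Either way the point you flag --- that $\overline{U}$ itself need not be a closed disc, so Brouwer cannot be applied to $f|_{\overline{U}}$ naively --- is precisely the issue the reduction circumvents.
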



\subsection{Monotone factors of toral flows}

We will now show that if the quotient flow under a monotone and upper
semi-continuous equivalence relation is minimal, then either all equivalence
classes are bounded, or all equivalence classes are unbounded.

\begin{lemma} \label{lem:bounded sets form open set}
	Let $S$ be a monotone upper semi-continuous equivalence relation on the
        torus $\mathbb{T}^2$. Then $B:=\{M\subseteq \mathbb{T}^2\big/S;\, M$
        bounded$\}$ is open in $\mathbb{T}^2\big/S$.
\end{lemma}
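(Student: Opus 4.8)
The plan is to show that the set $B$ of bounded equivalence classes is open by exhibiting, around any $M_0\in B$, an open neighbourhood in $\mathbb{T}^2\big/S$ consisting of bounded classes. Since $S$ is monotone and upper semi-continuous, each class $M\in\mathbb{T}^2\big/S$ is a continuum (compact, connected), and by the fact recalled in the preliminaries, $M$ is bounded if and only if it is inessential, i.e.\ $\mathbb{T}^2\setminus M$ is doubly essential. So the task reduces to: if $M_0$ is inessential, then all classes sufficiently close to $M_0$ (in the quotient topology) are inessential.

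First I would use that $M_0$ inessential means $\mathbb{T}^2\setminus M_0$ is doubly essential, hence $M_0$ is contained in an open topological disc $U\subseteq\mathbb{T}^2$ (lift to $\mathbb{R}^2$: $p^{-1}(M_0)$ has bounded components, pick one component $\tilde M_0$, enclose it in a large open disc $\tilde U\subseteq\mathbb{R}^2$ small enough that its $\mathbb{Z}^2$-translates that it meets form a disjoint family, and push forward; or more simply invoke that an inessential continuum has arbitrarily small simply connected open neighbourhoods). Then by the upper semi-continuity of $S$ (property (i) of Lemma \ref{lem:Uppersemicontinuityproperty_characterization}), there is an open set $V$ with $M_0\subseteq V$ such that every class $N$ meeting $V$ is contained in $U$. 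The set $V^* = \bigcup\{N\in\mathbb{T}^2\big/S : N\subseteq V\}$ is open by property (ii), it contains $M_0$, and $\pi_S(V^*)$ — which equals $\{N : N\subseteq V\}$ — is an open neighbourhood of $M_0$ in $\mathbb{T}^2\big/S$ (openness in the quotient because its preimage $V^*$ is open). Every $N$ in this neighbourhood satisfies $N\subseteq U$ with $U$ a topological disc, hence $N$ is a continuum contained in a disc, so $N$ is inessential, hence bounded. Therefore $\pi_S(V^*)\subseteq B$, and $B$ is open.

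The main obstacle I expect is making rigorous the step "an inessential continuum $M_0$ is contained in a topological disc $U$" in a way that cooperates with the $\mathbb{Z}^2$-action, i.e.\ producing a genuine open topological disc $U\subseteq\mathbb{T}^2$ (not merely an inessential open set) with $M_0\subseteq U$. The clean way is: work on the cover, take a bounded component $\tilde M_0$ of $p^{-1}(M_0)$, choose $R>0$ with $\tilde M_0\subseteq B_R[\tilde x_0]$ for some $\tilde x_0$, and note $p$ restricted to a suitable open disc around $\tilde M_0$ is injective provided the disc has diameter less than $1$; but a large $\tilde M_0$ need not fit in such a small disc. The fix is that we do not actually need $U$ to be a disc — it suffices that every continuum $N\subseteq U$ be inessential. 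So instead I would choose $U$ to be an \emph{inessential open} neighbourhood of $M_0$: since $M_0$ is a continuum with $\mathbb{T}^2\setminus M_0$ doubly essential, and doubly essentiality is an open condition on the complement, $M_0$ has an inessential open neighbourhood $U$ (e.g.\ take $U$ to be a small open neighbourhood of $M_0$ whose complement still contains two non-homotopic homotopically non-trivial curves). Any continuum inside an inessential open set is itself inessential (its complement in $\mathbb{T}^2$ contains $\mathbb{T}^2\setminus U$, which is doubly essential, and a superset of a doubly-essential open set is doubly essential). With this replacement the argument of the previous paragraph goes through verbatim, and the quotient-openness bookkeeping via Lemma \ref{lem:Uppersemicontinuityproperty_characterization} is then entirely routine.
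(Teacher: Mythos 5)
Your argument is correct, but it takes a genuinely different route from the paper's. The paper works upstairs in the universal cover: for a bounded class $M$ the connected components of $p^{-1}(M)$ are compact and form a single $\mathbb{Z}^2$-orbit $\{K+v;\,v\in\mathbb{Z}^2\}$ (this is where monotonicity of $S$ enters), so one can pick $\epsilon>0$ with the sets $B_\epsilon(K+v)$ pairwise disjoint; any class $N$ contained in the projected neighbourhood $p(B_\epsilon(p^{-1}(M)))$ then has each component of its lift trapped, by connectedness, inside a single bounded set $B_\epsilon(K+v)$, hence $N$ is bounded, and upper semi-continuity makes the set of such classes open in the quotient. You instead stay on the torus and use the quoted equivalence between boundedness and inessentiality for continua (monotonicity enters through connectedness of the classes): fix two non-homotopic essential curves in $\mathbb{T}^2\setminus M_0$, separate them from $M_0$ by an open set $U$, and note that every class $N\subseteq U$ keeps both curves in its complement, so $\mathbb{T}^2\setminus N$ is doubly essential and $N$, being a continuum, is bounded; upper semi-continuity then makes $\{N;\,N\subseteq U\}$ an open neighbourhood of $M_0$ in $\mathbb{T}^2\big/S$. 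Your version avoids all covering-space bookkeeping but leans on the homotopical characterisation of boundedness, which the paper only invokes in the subsequent lemma; the paper's version is more hands-on and self-contained at this point. Two cosmetic remarks: the parenthetical calling $\mathbb{T}^2\setminus U$ ``doubly essential'' misuses the definition (that notion is defined for open sets); what you need, and what your construction actually provides, is that $\mathbb{T}^2\setminus U$ contains the two witness curves, so that the \emph{open} set $\mathbb{T}^2\setminus N\supseteq\mathbb{T}^2\setminus U$ is doubly essential --- in particular $U$ itself need not be inessential for the argument. Likewise the detour through part (i) of Lemma~\ref{lem:Uppersemicontinuityproperty_characterization} is unnecessary: the saturated open set $U^*=\bigcup\{N;\,N\subseteq U\}$ from part (ii) already projects to the required open neighbourhood. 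Your decision to abandon the topological-disc attempt was the right call, and with the fix described the proof is sound.
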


\begin{proof} 
Let $\pi\colon \mathbb{R}^2\to \mathbb{T}^2$ be the canonical lift of the
torus. For $M\in B$ denote by $C_M$ the set of all connected components of
$\pi^{-1}(M)$. Note that all sets in $C_M$ are bounded and closed, hence compact
in $\mathbb{R}^2$ as $M$ is bounded in $\mathbb{T}^2$ and $S$ is upper
semi-continuous. For $K\in C_M$ we have $C_M=\{K+v;\, v\in \mathbb{Z}^2\}$,
since $M$ is connected in $\mathbb{T}^2$. Thus there is $\epsilon>0$ such that
$B_\epsilon(K)$ and $B_\epsilon(L)$ are disjoint for distinct $K,L\in C_M$.

Let $U:=\pi(B_\epsilon(\pi^{-1}(M)))$ and $\mathcal{V}=\{N\in
\mathbb{T}^2\big/S;\, N\subseteq U\}$. Note that
$\pi^{-1}(U)=B_\epsilon(\pi^{-1}(M))$ which implies that $U$ is open. As $S$ is
upper semi-continuous, we obtain $\bigcup_{V\in \mathcal{V}} V$ to be open in
$\mathbb{T}^2$, i.e.\ $V$ to be open in $\mathbb{T}^2\big/S$ by Lemma
\ref{lem:Uppersemicontinuityproperty_characterization}(ii). Note that $M\in
V$. To show that $V\subseteq B$ let $N\in V$. Then $\bigcup_{L\in C_N}
L=\pi^{-1}(N)\subseteq \pi^{-1}(U)=B_\epsilon(\pi^{-1}(M))=\bigcup_{K\in C_M}
B_\epsilon(K)$. As each $L\in C_N$ is connected there is a unique $K\in C_M$
such that $L\subseteq B_\epsilon(K)$, which implies $L$ to be bounded in
$\mathbb{R}^2$. This proves $N$ to be bounded in $\mathbb{T}^2$.
\end{proof}

\begin{lemma} \label{lem:oneinessentialthenallinessential}
	Let $S$ be a monotone upper semi-continuous $T$-invariant equivalence
        relation on the flow $(\mathbb{T}^2,T)$. If $(\mathbb{T}^2\big/S,T)$ is
        minimal and if there exists an inessential set $M\in \mathbb{T}^2
        \big/$, then all sets in $\mathbb{T}^2\big/S$ are inessential.
\end{lemma}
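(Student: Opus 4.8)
The plan is to exploit minimality together with the fact that, by Lemma~\ref{lem:bounded sets form open set}, the bounded classes form an open subset $B$ of $\mathbb{T}^2\big/S$, and to combine this with the fact (stated in the excerpt) that a continuum in $\mathbb{T}^2$ is bounded if and only if it is inessential. So ``$M$ inessential'' means exactly ``$M$ bounded'', and the set of inessential classes is precisely $B$. First I would observe that $B$ is $T$-invariant: if $M\in B$ and $t\in T$, then $f^t(M)$ is the continuous homeomorphic image of a bounded class, hence again bounded — more precisely, $f^t$ lifts to a homeomorphism of $\mathbb{R}^2$ commuting with the $\mathbb{Z}^2$-action, so it maps bounded components of $\pi^{-1}(M)$ to bounded components of $\pi^{-1}(f^t(M))$. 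Thus $B$ is a nonempty (it contains the class of $M$), open, $T$-invariant subset of the minimal flow $(\mathbb{T}^2\big/S,T)$.

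The key step is then to show that $B$ is also closed, so that minimality forces $B=\mathbb{T}^2\big/S$, which is exactly the conclusion that every class is inessential. The natural way to get closedness is to show that the complement $B^c=\{N\in \mathbb{T}^2\big/S;\ N\text{ unbounded}\}$ is open as well; since $\mathbb{T}^2\big/S$ is compact Hausdorff (hence normal) this is the cleanest route. To do this I would run an argument parallel to the proof of Lemma~\ref{lem:bounded sets form open set}: take $N$ unbounded, so some connected component $L$ of $\pi^{-1}(N)$ is unbounded; since $N$ is inessential-or-not is governed by this single unbounded continuum $L\subseteq\mathbb{R}^2$, one thickens $L$ slightly, pushes down to $\mathbb{T}^2$ to get an open saturated-type neighbourhood, and uses upper semicontinuity (Lemma~\ref{lem:Uppersemicontinuityproperty_characterization}) to produce an open neighbourhood $\mathcal V$ of $N$ in the quotient all of whose members contain an unbounded component, hence are unbounded. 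Concretely: for $K\in C_N$ choose a large $R$ with $L$ meeting both ``sides'' of a fundamental domain in a way that cannot be contained in any bounded set; then any class $N'$ close enough to $N$ in the quotient has a component forced to stretch across the same region, hence to be unbounded.

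Alternatively — and this may be the more robust formulation — I would use that minimality of $(\mathbb{T}^2\big/S,T)$ makes the $T$-orbit of the class $M$ dense, so that $B\supseteq\overline{\{f^t([x]S): t\in T\}}=\mathbb{T}^2\big/S$ provided one knows $B$ is closed; the content is therefore really the closedness of $B$, equivalently the openness of the set of unbounded classes. The main obstacle I anticipate is precisely proving that the unbounded classes form an open set: boundedness is a ``finite-scale'' property and passes to neighbourhoods easily (that is Lemma~\ref{lem:bounded sets form open set}), whereas unboundedness is an ``escape to infinity'' property, and one must argue that a nearby class cannot suddenly become bounded — this requires using that the relevant class $N$ carries an honestly unbounded continuum in $\mathbb{R}^2$ whose lift in $\mathbb{T}^2$ meets a doubly-essential or essential obstruction that survives small perturbations in the Hausdorff-type topology coming from upper semicontinuity. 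Once both $B$ and its complement are open, connectedness of $\mathbb{T}^2\big/S$ (it is a continuous image of the connected space $\mathbb{T}^2$) together with $B\neq\emptyset$ gives $B=\mathbb{T}^2\big/S$, which is the claim.
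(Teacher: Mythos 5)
You correctly set up the right objects: by Lemma~\ref{lem:bounded sets form open set} the bounded (equivalently, inessential) classes form an open set $B$, and $B$ is $T$-invariant and nonempty. But then you take a wrong turn. The key step you announce --- showing that the complement $U:=\{N\in\mathbb{T}^2\big/S;\ N \text{ unbounded}\}$ is \emph{open}, so that $B$ is clopen --- is neither needed nor achievable by the purely topological argument you sketch. Upper semicontinuity only controls classes from above: if $N'$ is close to $N$ in the quotient, you learn that $N'$ is contained in a small neighbourhood of $N$, not that $N'$ must ``stretch across'' any unbounded region. Indeed, the statement ``the unbounded classes form an open set'' is false for general monotone upper semi-continuous decompositions of $\mathbb{T}^2$: take the decomposition whose only non-degenerate class is a single essential circle, all other classes being singletons. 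The circle is unbounded, yet every neighbourhood of it in the quotient contains bounded (singleton) classes. So your second paragraph cannot be carried out at that level of generality, and the obstacle you flag as the ``main obstacle'' is a genuine dead end, not a technical nuisance.

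The fix is that no such openness is required, and this is exactly how the paper argues. Minimality of $(\mathbb{T}^2\big/S,T)$ says that every \emph{closed} $T$-invariant subset of $\mathbb{T}^2\big/S$ is either empty or the whole space. The set $U$ of unbounded classes is $T$-invariant (boundedness is preserved by torus homeomorphisms) and closed, being the complement of the open set $B$ from Lemma~\ref{lem:bounded sets form open set}. Since an inessential, i.e.\ bounded, class exists, $U\neq \mathbb{T}^2\big/S$, hence $U=\emptyset$, which is the claim. In other words, you should apply minimality to the closed invariant complement of $B$ rather than trying to upgrade $B$ to a clopen set; your connectedness variant at the end suffers from the same unproved openness of $U$ and is likewise unnecessary.
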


\begin{proof} Note that a set is inessential if and only if it is bounded.
	As boundedness of subsets of the torus is invariant under homeomorphisms
        from the torus to the torus we know $U:=\{M\subseteq
        \mathbb{T}^2\big/S;\, M$ unbounded$\}$ to be $T$-invariant. Furthermore
        by Lemma \ref{lem:bounded sets form open set} we know $U$ to be closed
        in $\mathbb{T}^2\big/S$. Thus the minimality shows that either
        $U=\emptyset$ or $U=\mathbb{T}^2\big/S$. If we assume the existence of
        an inessential set in $\mathbb{T}^2\big/S$, then $U\neq
        \mathbb{T}^2\big/S$, hence $U=\emptyset$. Thus all elements of
        $\mathbb{T}^2\big/S$ are inessential.
\end{proof}

\begin{theorem} \label{the:characterizationofminimalfactorsofflowsonTorus}
	Let $(\mathbb{T}^2,T)$ be a strongly effective flow. Every monotone
        minimal factor $(\mathbb{T}^2\big/S,T)$ of $(\mathbb{T}^2,T)$ is either
        conjugate to a flow on $\mathbb{T}^2$, conjugate to a flow on the circle
        $S^1$ or conjugate to a flow on a single point.
\end{theorem}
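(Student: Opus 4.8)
The plan is to deduce the statement from the Moore-type results stated above (Theorem~\ref{the:Moorestheorem} and Proposition~\ref{pro:kleinerMoore}), the dichotomy in Lemma~\ref{lem:oneinessentialthenallinessential}, and the observation that every equivalence class of $S$ is non-separating; strong effectiveness is used only for this last observation, through the Brouwer-type Lemma~\ref{lem:Brouwerrevisited}. Write $(Y,T):=(\mathbb{T}^2\big/S,T)$. Since $S$ is a closed (being a factor) $T$-invariant equivalence relation on the compact metric space $\mathbb{T}^2$, it is upper semi-continuous by Proposition~\ref{pro:Uppersemicontinu_is_closed_CompactHausdorff}, and it is monotone by hypothesis, so every class $M\in\mathbb{T}^2\big/S$ is a continuum.

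The heart of the argument is the claim that no class $M_0$ separates $\mathbb{T}^2$, i.e.\ $\mathbb{T}^2\setminus M_0$ is connected for each class $M_0$. To prove it, suppose some $M_0$ separates. By a standard fact on complementary domains of continua in the torus, some component $V$ of $\mathbb{T}^2\setminus M_0$ is a topological disc; note that $\mathbb{T}^2\setminus V$ is connected, since it equals $M_0$ together with the closures of the other components of $\mathbb{T}^2\setminus M_0$, each meeting $M_0$, and that $\mathbb{T}^2\setminus V$ is not contained in any topological disc, for otherwise $\mathbb{T}^2$ would be a union of two topological discs. Moreover $V$ is $S$-saturated (a union of classes), since a class meeting $V$ is connected and disjoint from $M_0$, hence contained in the single component $V$; therefore $W:=\pi_S(V)$ is a nonempty open subset of $Y$ by Lemma~\ref{lem:Uppersemicontinuityproperty_characterization}(ii). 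By minimality of $(Y,T)$ the $T$-iterates of $W$ cover $Y$, and as $V=\pi_S^{-1}(W)$ is saturated the iterates $f^t(V)$, $t\in T$, cover $\mathbb{T}^2$; since $M_0\neq\emptyset$, some $f^s(V)$ meets $M_0$. As $f^s(V)$ is a connected component of $\mathbb{T}^2\setminus f^s(M_0)$ and $M_0$ is connected, we must have either $M_0=f^s(M_0)$ — impossible, as then $M_0\cap f^s(V)=f^s(M_0\cap V)=\emptyset$ — or $M_0\subseteq f^s(V)$; writing $t:=s^{-1}$, this means $f^t(M_0)\subseteq V$. Now $\mathbb{T}^2\setminus V$ is a connected subset of $\mathbb{T}^2\setminus f^t(M_0)$ not contained in any topological disc, whereas $f^t(V)$ is a topological disc and a component of $\mathbb{T}^2\setminus f^t(M_0)$; hence $\mathbb{T}^2\setminus V$ lies in a component of $\mathbb{T}^2\setminus f^t(M_0)$ different from $f^t(V)$, which forces $f^t(V)\subseteq V$. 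Since $\partial f^t(V)=f^t(\partial V)\subseteq f^t(M_0)\subseteq V$, we conclude $f^t(\overline V)=\overline{f^t(V)}\subseteq V$; and $t\neq e$, because $M_0\cap V=\emptyset$ rules out $f^t(M_0)=M_0\subseteq V$. By Lemma~\ref{lem:Brouwerrevisited}, $f^t$ has a fixed point, contradicting strong effectiveness of $(\mathbb{T}^2,T)$. This proves the claim.

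To finish, apply Lemma~\ref{lem:oneinessentialthenallinessential}: either every class of $S$ is inessential, or every class is unbounded. If every class is inessential, each is a non-separating inessential continuum and Moore's Theorem~\ref{the:Moorestheorem} yields $Y\cong\mathbb{T}^2$. If every class is unbounded and some class $M_0$ is doubly essential, then $\mathbb{T}^2\setminus M_0$ is $S$-saturated and inessential, hence a union of classes, each a connected subset of a topological disc and therefore inessential; since every class is unbounded this forces $\mathbb{T}^2\setminus M_0=\emptyset$, so $S$ has a single class and $Y$ is a single point. Otherwise every class is essential and, by the claim, non-separating, i.e.\ an annular continuum, and Proposition~\ref{pro:kleinerMoore} yields $Y\cong S^1$. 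In each case, transporting the quotient flow along the corresponding homeomorphism exhibits $(Y,T)$ as conjugate to a flow on $\mathbb{T}^2$, on $S^1$, or on a single point.

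I expect the claim to be the main obstacle. The delicate part is maneuvering a translate $f^t(V)$ of the disc $V$ so that $f^t(\overline V)\subseteq V$ with $t\neq e$, which then contradicts strong effectiveness via Lemma~\ref{lem:Brouwerrevisited}; this relies on the standard but non-trivial surface-topology input that a separating continuum in $\mathbb{T}^2$ has a topological disc among its complementary components (and the elementary facts that the complement of such a disc is connected and not contained in a disc). The remaining steps are invocations of the earlier lemmas or routine point-set arguments.
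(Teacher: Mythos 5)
Your argument is correct and reaches the paper's trichotomy, but it is organized differently from the paper's proof. The paper first splits according to the essentiality type of a class: if some class $M$ is doubly essential, then no other class can be, so $M$ is a fixed class and minimality immediately gives the one-point quotient; if some class is inessential, then by Lemma~\ref{lem:oneinessentialthenallinessential} all are, and the Brouwer--minimality--strong-effectiveness mechanism is applied only there, and only to an inessential complementary component $U$ of $M$ itself (minimality pushes $f^t(M)$ into $U$, whence $f^t(\overline U)\subseteq U$ and Lemma~\ref{lem:Brouwerrevisited} gives a contradiction), yielding non-separation and Moore's theorem; in the remaining case all classes are essential and non-separation follows by purely topological reasoning (the complement of a class is saturated by essential classes, so has no inessential component, and a continuum's complement has at most one essential component), with no dynamics needed. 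You instead front-load a single dynamical claim --- no class separates --- proved by the same Brouwer mechanism, but applied to an arbitrary topological-disc component $V$ of the complement of an arbitrary class; this buys a uniform statement and lets the final step be pure bookkeeping with Theorem~\ref{the:Moorestheorem} and Proposition~\ref{pro:kleinerMoore}, and your boundedness argument replaces the paper's fixed-class argument in the doubly essential case. The price is two additional surface-topology inputs that the paper does not need: that every separating continuum in $\mathbb{T}^2$ has a topological disc among its complementary domains (this requires checking the inessential, essential and doubly essential cases, via the standard classification of complementary domains), and that $\mathbb{T}^2$ is not the union of two open discs (e.g.\ by the cup-length or Lusternik--Schnirelmann category argument). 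These are of the same ``well-known'' calibre as facts the paper itself invokes without proof, but in a final write-up you should state them explicitly with a justification or reference, since they carry the weight of your unified claim. Apart from this, the individual steps (saturation of $V$ by monotonicity, openness of $\pi_S(V)$ via Lemma~\ref{lem:Uppersemicontinuityproperty_characterization}(ii), covering by iterates via minimality, the dichotomy $M_0=f^s(M_0)$ versus $M_0\subseteq f^s(V)$, the verification that $t\neq e$, and the boundedness of continua lying in an inessential open set) are all sound.
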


\begin{proof}
	First assume that there exists a doubly essential set $M\in
        \mathbb{T}^2/S$. As no other element of $\mathbb{T}^2\big/S$ can be
        doubly essential we obtain $M$ to be a fix point of $f^t$ for each $t\in
        T$. As $(\mathbb{T}^2\big/S,T)$ is assumed to be minimal, this implies
        that $M=\mathbb{T}^2$, i.e.\ that $\mathbb{T}^2\big/S=\{\mathbb{T}^2\}$
        is the flow on a single point.
	
	As a second case, we assume the existence of an inessential set in
        $\mathbb{T}^2\big/S$. As shown in Lemma
        \ref{lem:oneinessentialthenallinessential}, we obtain all sets in
        $\mathbb{T}^2\big/S$ to be inessential. Let $M\in
        \mathbb{T}^2\big/S$. As $M$ is inessential we know one connected
        component of $\mathbb{T}^2\setminus M$ to be doubly essential and all
        other connected components to be inessential. Let $U$ be an inessential
        connected component of $\mathbb{T}^2\setminus M$. As $U$ is inessential and open
        and $M$ is connected, $U$ is simply connected and thus a topological
        disc. Let $U^*:=\{N\in X\big/S;\, N\subseteq U\}$. As $S$ is monotone,
        we obtain $U=\bigcup_{N\in U^*} N$, hence $U^*\neq \emptyset$. Since $S$
        is upper semi-continuous we have that $U^*$ is open in
        $\mathbb{T}^2\big/S$. Thus the minimality of $(\mathbb{T}^2\big/S,T)$
        implies the existence of $t\in T$ such that $f^t(M)\in U^*$,
        i.e.\ $f^t(M)\subseteq U$. Since $f^t(U)$ is inessential, it is mapped
        to an inessential connected component of $f^t(M)$, hence $f^t(U)\subseteq U$. As
        $f^t(\overline{U})\subseteq f^t(M)\cup f^t(U)\subseteq U$, Lemma
        \ref{lem:Brouwerrevisited} yields the existence of a fix point of $f^t$,
        which contradicts the assumption on $(\mathbb{T}^2,T)$ to be strongly
        effective. Thus there are no inessential connected components of
        $\mathbb{T}^2\setminus M$ and $M$ is non separating. We can now apply
        Moores Theorem (Theorem \ref{the:Moorestheorem}) to obtain a
        homeomorphism between $\mathbb{T}^2\big/S$ and $\mathbb{T}^2$.
	
	For the remaining case assume that all sets in $\mathbb{T}^2\big/S$ are essential. Let $M\in \mathbb{T}^2\big/S$. As each connected component of $\mathbb{T}^2\setminus M$ is a union of essential sets, the complement of $M$ cannot contain any inessential connected components. By connectedness of $M$, $\mathbb{T}^2\setminus M$ cannot contain more than one essential connected component either, so $M$ is non separating. Thus Proposition \ref{pro:kleinerMoore} yields $\mathbb{T}^2\big/S$ to be homeomorphic to $S^1$.
\end{proof}

\begin{remark}
	Note that the argument in the proof of Theorem \ref{the:characterizationofminimalfactorsofflowsonTorus} shows that one can obtain the homeomorphism class of $\mathbb{T}^2\big/S$ by looking at some $M\in \mathbb{T}^2\big/S$. If $M$ is inessential, then $\mathbb{T}^2\big/S$ is homeomorphic to $\mathbb{T}^2$ and all sets in $\mathbb{T}^2\big/S$ are inessential. If $M$ is essential, then we obtain $\mathbb{T}^2\big/S$ to be homeomorphic to $S^1$ and all sets in $\mathbb{T}^2\big/S$ are essential. If $M$ is doubly essential, then $\mathbb{T}^2\big/S$ is homeomorphic to a point and $M=\mathbb{T}^2$.
\end{remark}

If we assume the flow instead of the factor to be minimal we obtain the following. 

\begin{corollary} \label{cor:monotonefactorofminimalflowonntorus}
	Every monotone factor $(\mathbb{T}^2\big/S,T)$ of a minimal flow $(\mathbb{T}^2,T)$ is either conjugate to a flow on $\mathbb{T}^2$, conjugate to a flow on the circle $S^1$ or conjugate to a flow on a single point.
\end{corollary}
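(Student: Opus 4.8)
The plan is to deduce the statement from Theorem~\ref{the:characterizationofminimalfactorsofflowsonTorus} by replacing $T$ with its effective subgroup. First I would put the standing hypotheses in the form required there. Writing the factor as $(\mathbb{T}^2\big/S,T)$ with $S$ a closed and $T$-invariant equivalence relation (as noted in the preliminaries, the relation associated to a factor map is always of this form), Proposition~\ref{pro:Uppersemicontinu_is_closed_CompactHausdorff} shows that $S$ is upper semi-continuous, and $S$ is monotone by assumption. Moreover every factor of a minimal flow is minimal, since the image of a dense orbit under a factor map is again a dense orbit; hence $(\mathbb{T}^2\big/S,T)$ is minimal. So $S$ is a monotone upper semi-continuous $T$-invariant equivalence relation on the minimal flow $(\mathbb{T}^2,T)$.

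The essential point is that $(\mathbb{T}^2,T)$ itself need not be strongly effective, so Theorem~\ref{the:characterizationofminimalfactorsofflowsonTorus} cannot be applied directly. To remedy this, let $F\subseteq T$ be the closed normal subgroup of all $t$ with $f^t=\mathrm{id}_{\mathbb{T}^2}$, and let $G:=T\big/F$ be the effective subgroup, acting by $f^{[t]}:=f^t$. The flow $(\mathbb{T}^2,G)$ has exactly the same orbits as $(\mathbb{T}^2,T)$, hence is still minimal, and it is effective by construction; since a minimal effective flow is strongly effective, $(\mathbb{T}^2,G)$ is strongly effective. As $f^{[t]}=f^t$, the relation $S$ is also $G$-invariant, so $(\mathbb{T}^2\big/S,G)$ is a monotone minimal factor of the strongly effective flow $(\mathbb{T}^2,G)$, and Theorem~\ref{the:characterizationofminimalfactorsofflowsonTorus} applies to it: $\mathbb{T}^2\big/S$ is homeomorphic to $\mathbb{T}^2$, to $S^1$, or to a single point, via a homeomorphism $h$ conjugating the factor $G$-flow to a $G$-flow on the target space.

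It remains to transfer this conclusion back to the original acting group $T$. Denoting by $\bar f$ the factor flow on $\mathbb{T}^2\big/S$ and setting $g^t:=h\circ\bar f^{[t]}\circ h^{-1}$, one checks that $t\mapsto g^t$ is well defined (it factors through the continuous quotient $T\to G$), continuous, and a $T$-action (because $[st]=[s][t]$), so $h$ is in fact a conjugacy of $T$-flows between $(\mathbb{T}^2\big/S,T)$ and a $T$-flow on $\mathbb{T}^2$, on $S^1$, or on a single point. This yields the assertion. The only step that requires genuine care is the passage to the effective subgroup --- verifying that minimality and invariance of $S$ survive the replacement of $T$ by $G$, and that a $G$-conjugacy is automatically a $T$-conjugacy --- since everything else is immediate once Theorem~\ref{the:characterizationofminimalfactorsofflowsonTorus} is in force; there is no serious analytic obstacle here.
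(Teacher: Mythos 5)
Your proposal is correct and follows essentially the same route as the paper: pass to the effective subgroup $G$, observe that $(\mathbb{T}^2,G)$ is effective and minimal hence strongly effective, note that $(\mathbb{T}^2\big/S,G)$ is a monotone minimal factor, and apply Theorem~\ref{the:characterizationofminimalfactorsofflowsonTorus}. Your additional verifications (that $S$ remains $G$-invariant and that the resulting $G$-conjugacy transfers back to a $T$-conjugacy since the action factors through $T\to G$) are details the paper leaves implicit, and they are handled correctly.
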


\begin{proof}
	Let $G$ be the effective subgroup of $T$ with respect to the flow $(\mathbb{T}^2,T)$. Note that $(\mathbb{T}^2,G)$ is an effective and minimal flow, hence strongly effective. Since $(\mathbb{T}^2\big/S,G)$ is a factor of the minimal flow $(\mathbb{T}^2,G)$ we obtain $(\mathbb{T}^2\big/S,G)$ to be minimal. As the factor map $\pi_S$ is monotone, the statement follows from Theorem \ref{the:characterizationofminimalfactorsofflowsonTorus}.
\end{proof}

Combining Theorem \ref{thm:SeqoncompactandlocallyconnectedHSpaceisMonotone} with Corollary \ref{cor:monotonefactorofminimalflowonntorus} yields the following. 

\begin{corollary}\label{c.mef_on_torus}
	Every maximal equicontinuous factor of a minimal flow on the torus
        $\mathbb{T}^2$ is either conjugate to an equicontinuous flow on
        $\mathbb{T}^2$, to an equicontinuous flow on the circle, or the flow on
        a single point.
\end{corollary}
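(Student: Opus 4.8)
The plan is to combine Theorem~\ref{thm:SeqoncompactandlocallyconnectedHSpaceisMonotone} with Corollary~\ref{cor:monotonefactorofminimalflowonntorus}; no new topological or dynamical input should be needed. Let $(\mathbb{T}^2,T)$ be a minimal flow and let $(\mathbb{T}^2\big/S_{eq},T)$ denote its maximal equicontinuous factor. First I would recall that $S_{eq}$ is, by definition, a closed $T$-invariant equivalence relation on the compact Hausdorff space $\mathbb{T}^2$, hence upper semi-continuous by Proposition~\ref{pro:Uppersemicontinu_is_closed_CompactHausdorff}. Since $\mathbb{T}^2$ is in addition locally connected, Theorem~\ref{thm:SeqoncompactandlocallyconnectedHSpaceisMonotone} applies and yields that $S_{eq}$ is monotone, that is, the factor map $\pi_{S_{eq}}$ has connected point preimages.

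Consequently $(\mathbb{T}^2\big/S_{eq},T)$ is a monotone factor of the minimal flow $(\mathbb{T}^2,T)$, so Corollary~\ref{cor:monotonefactorofminimalflowonntorus} applies verbatim and shows that $(\mathbb{T}^2\big/S_{eq},T)$ is conjugate to a flow on $\mathbb{T}^2$, to a flow on the circle $S^1$, or to a flow on a single point. It then remains only to note that equicontinuity passes through topological conjugacy: a homeomorphism between compact Hausdorff spaces is automatically a uniform isomorphism with respect to the respective unique uniformities, so it transports an equicontinuous action to an equicontinuous action. As the maximal equicontinuous factor is equicontinuous by construction, each of the three conjugate models is in fact an equicontinuous flow on $\mathbb{T}^2$, on the circle, or on a single point, which is exactly the asserted trichotomy.

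I do not expect any real obstacle at this point, since all the substantive work has been carried out in Theorem~\ref{thm:SeqoncompactandlocallyconnectedHSpaceisMonotone} and Corollary~\ref{cor:monotonefactorofminimalflowonntorus}. The only steps needing (routine) care are the passage from ``closed'' to ``upper semi-continuous'' for the equivalence relation $S_{eq}$ on the compact Hausdorff space $\mathbb{T}^2$, supplied by Proposition~\ref{pro:Uppersemicontinu_is_closed_CompactHausdorff}, and the conjugacy-invariance of equicontinuity noted above; neither requires a separate argument beyond a sentence. If desired, one could also remark here that, being a factor of a minimal flow, $(\mathbb{T}^2\big/S_{eq},T)$ is itself minimal, which connects this statement to the refined classification of Theorem~B, but this is not needed for the proof.
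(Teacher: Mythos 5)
Your proposal is correct and follows exactly the route the paper takes: the paper derives this corollary in one line by combining Theorem~\ref{thm:SeqoncompactandlocallyconnectedHSpaceisMonotone} (monotonicity of $S_{eq}$ on the locally connected compact space $\mathbb{T}^2$) with Corollary~\ref{cor:monotonefactorofminimalflowonntorus}. Your additional remarks on upper semi-continuity and on conjugacy preserving equicontinuity are routine and consistent with the paper's intent.
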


As another corollary of Theorem \ref{the:characterizationofminimalfactorsofflowsonTorus} we obtain the following.

\begin{corollary} \label{cor:monotoneminimalandeffectivefactorthencharacterisation}
	Every monotone minimal relatively effective factor $(X\big/S,T)$ of a
        flow $(\mathbb{T}^2,T)$ is either conjugate to a flow on $\mathbb{T}^2$,
        conjugate to a flow on the circle or conjugate to a flow on a
        single point.
\end{corollary}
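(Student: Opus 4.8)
The plan is to reduce Corollary~\ref{cor:monotoneminimalandeffectivefactorthencharacterisation} to the already-established Theorem~\ref{the:characterizationofminimalfactorsofflowsonTorus}, whose hypothesis is that the \emph{base} flow $(\mathbb{T}^2,T)$ is strongly effective. Since we are only told that the factor $(X\big/S,T)$ is minimal and relatively effective, the work consists in passing to a suitable subgroup so that the base flow becomes strongly effective without changing the factor as a topological dynamical system.

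First I would recall that $(X\big/S,T)$ being relatively effective means, by the definition given in the preliminaries, that the effective subgroups of $(\mathbb{T}^2,T)$ and of $(X\big/S,T)$ coincide: some $t\in T$ acts as the identity on $\mathbb{T}^2$ if and only if it acts as the identity on $X\big/S$. Let $F\trianglelefteq T$ be the common kernel of the action, and set $G:=T\big/F$. Then $(\mathbb{T}^2,G)$ and $(X\big/S,G)$ are both \emph{effective} flows (with the $G$-action defined by $f^{[t]}:=f^t$ as in the preliminaries), and $(X\big/S,G)$ is still a factor of $(\mathbb{T}^2,G)$ via the same factor map $\pi_S$, which remains monotone because monotonicity is a property of the map alone and does not depend on the acting group. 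Moreover $(X\big/S,G)$ is still minimal, since minimality of the flow only depends on the orbits, which are unchanged when we quotient out the kernel.

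Next I would upgrade effectiveness to strong effectiveness. The flow $(X\big/S,G)$ is effective and minimal, and the preliminaries record (citing \cite[Chapter~1]{auslander1988minimal}) that for minimal flows effectiveness is equivalent to strong effectiveness; hence $(X\big/S,G)$ is strongly effective. Now apply Lemma~\ref{lem:factorminimalandeffecitveimpliesextensiontobestronglyeffective}: since $(X\big/S,G)$ is a strongly effective factor of $(\mathbb{T}^2,G)$, the extension $(\mathbb{T}^2,G)$ is strongly effective as well. At this point all hypotheses of Theorem~\ref{the:characterizationofminimalfactorsofflowsonTorus} are met for the flow $(\mathbb{T}^2,G)$ and its monotone minimal factor $(X\big/S,G)$, so that theorem gives that $(X\big/S,G)$ — equivalently $X\big/S$ as a space — is conjugate to a flow on $\mathbb{T}^2$, to a flow on $S^1$, or to a flow on a single point. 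Finally, passing back from $G$ to $T$ does not affect the conjugacy class of the flow, since the $T$-action and the $G$-action on $X\big/S$ have exactly the same set of transformations $\{f^t:t\in T\}=\{f^{[t]}:[t]\in G\}$; so the same trichotomy holds for $(X\big/S,T)$, which is the claim.

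The only real subtlety — and the step I would be most careful about — is the bookkeeping around replacing $T$ by $G=T\big/F$: one must check that ``relatively effective'' is precisely the condition ensuring the \emph{same} $F$ works for both $\mathbb{T}^2$ and $X\big/S$ (so that $\pi_S$ is still a genuine $G$-factor map and not merely a $T$-factor map), and that minimality, monotonicity, and conjugacy type are all preserved under this change of acting group. Each of these is immediate from the definitions recorded in the preliminaries, so there is no genuine obstacle; the proof is essentially a concatenation of Lemma~\ref{lem:factorminimalandeffecitveimpliesextensiontobestronglyeffective}, the minimal-effective-implies-strongly-effective fact, and Theorem~\ref{the:characterizationofminimalfactorsofflowsonTorus}.
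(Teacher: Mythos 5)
Your proposal is correct and follows essentially the same route as the paper: pass to the effective subgroup $G$ (which relative effectiveness makes common to base and factor), observe that $(\mathbb{T}^2\big/S,G)$ is minimal and effective hence strongly effective, use Lemma~\ref{lem:factorminimalandeffecitveimpliesextensiontobestronglyeffective} to get strong effectiveness of $(\mathbb{T}^2,G)$, and conclude by Theorem~\ref{the:characterizationofminimalfactorsofflowsonTorus}. The extra care you take with the change of acting group (minimality, monotonicity, and conjugacy type being unaffected) is left implicit in the paper but is the same argument.
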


\begin{proof}
	Let $G$ be the effective subgroup of $T$ with respect to
        $(\mathbb{T}^2\big/S,T)$ and note that the induced flows
        $(\mathbb{T}^2,G)$ and $(\mathbb{T}^2\big/S,G)$ are effective and
        $(\mathbb{T}^2\big/S,G)$ is a monotone minimal factor of
        $(\mathbb{T}^2,G)$. As $(\mathbb{T}^2\big/S,G)$ is minimal and
        effective, it is also strongly effective. Thus $(\mathbb{T}^2,G)$ is
        strongly effective by Lemma
        \ref{lem:factorminimalandeffecitveimpliesextensiontobestronglyeffective},
        and Theorem \ref{the:characterizationofminimalfactorsofflowsonTorus}
        implies $\mathbb{T}^2\big/S$ to be homeomorphic either to
        $\mathbb{T}^2$, to $S^1$, or to a single point.
\end{proof}

If the flow is a $\mathbb{Z}$-action, then every minimal factor is effective as shown in the next lemma. 

\begin{lemma}\label{lem:nontrivialmonotoneminimaldiscretefactoriseffective}
	Every non trivial\footnote{A factor $(Y,T)$ of a flow $(X,T)$ is
          considered to be \emph{trivial}, if $Y$ consists of one point.}
        monotone minimal factor of a discrete flow $(\mathbb{T}^2,\mathbb{Z})$
        is effective.
\end{lemma}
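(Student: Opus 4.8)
The plan is to argue by contradiction and show that a non-effective factor of this type is necessarily trivial. Write $Y:=\mathbb{T}^2\big/S$ and let $\bar f$ denote the homeomorphism of $Y$ induced by $f$, so that the $\mathbb{Z}$-action on $Y$ is generated by $\bar f$. Suppose $(Y,\mathbb{Z})$ is \emph{not} effective. Then there is $n\in\mathbb{Z}$ with $n\neq 0$ and $\bar f^{\,n}=\mathrm{id}_Y$; replacing $n$ by $-n$ if necessary we may assume $n\geq 1$. In particular the $\mathbb{Z}$-action on $Y$ factors through the finite cyclic group $\mathbb{Z}/n\mathbb{Z}$.

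Next I would use minimality to deduce that $Y$ is a finite set. Since $\bar f^{\,n}=\mathrm{id}_Y$, every orbit $\{y,\bar f(y),\dots,\bar f^{\,n-1}(y)\}$ of $\bar f$ in $Y$ has at most $n$ elements. Because $S$ is a closed equivalence relation on the compact Hausdorff space $\mathbb{T}^2$, Proposition~\ref{pro:Uppersemicontinu_is_closed_CompactHausdorff} shows that $Y$ is a compact Hausdorff space; in particular every finite subset of $Y$ is closed. By minimality of $(Y,\mathbb{Z})$ each orbit is dense in $Y$, and a dense closed subset of $Y$ is all of $Y$. Hence $Y$ consists of a single (finite) orbit and is therefore a finite set.

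Finally, the factor map $\pi_S\colon\mathbb{T}^2\to Y$ is continuous and onto, so $Y$ is the continuous image of the connected space $\mathbb{T}^2$ and is thus connected. A nonempty finite connected Hausdorff space is a single point; equivalently, the classes $M\in\mathbb{T}^2\big/S$ are then finitely many pairwise disjoint nonempty closed sets covering the connected space $\mathbb{T}^2$, so there can be only one of them. This contradicts the hypothesis that the factor is non-trivial, and therefore $(Y,\mathbb{Z})$ must be effective.

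There is no genuine obstacle in this argument: the whole content is the observation that a non-effective minimal $\mathbb{Z}$-factor would be carried by a finite set, which is incompatible with being a quotient of the connected torus. (Note that monotonicity of $S$ is not actually needed here; it is part of the standing hypotheses of this subsection.)
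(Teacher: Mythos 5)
Your argument is correct and follows essentially the same route as the paper: assume non-effectiveness to get a finite orbit, use minimality to conclude the quotient is finite, and then use that the classes are closed sets partitioning the connected torus to force a single class, contradicting non-triviality. The extra details you supply (Hausdorffness of the quotient via Proposition~\ref{pro:Uppersemicontinu_is_closed_CompactHausdorff}, and the remark that monotonicity is not needed) are consistent with, and implicit in, the paper's proof.
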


\begin{proof}
If $(\mathbb{T}^2\big/S,\mathbb{Z})$ is a non trivial monotone minimal factor of
$(\mathbb{T}^2,\mathbb{Z})$ that is not effective, then there is $n\in
\mathbb{N}$ such that $f^n$ is the identity in $\mathbb{T}^2\big/S$, hence the
orbit of any $M\in X\big/S$ has only finitely many elements. As
$\mathbb{T}^2\big/S$ is assumed to be minimal this implies $\mathbb{T}^2\big/S$
to have finitely many elements. But each of these elements is closed as a subset
of the connected set $\mathbb{T}^2$ and we obtain $\mathbb{T}^2\big/S$ to
consist of exactly one element, a contradiction.
\end{proof}

As the minimal discrete flows on the circle are the irrational circle rotations
we obtain the following corollary from Lemma
\ref{lem:nontrivialmonotoneminimaldiscretefactoriseffective} and Corollary
\ref{cor:monotoneminimalandeffectivefactorthencharacterisation}.
	
\begin{corollary} \label{cor:monotoneMinimalfactorofDiscreteflowonTorus}
	Every monotone minimal factor of a discrete flow
        $(\mathbb{T}^2,\mathbb{Z})$ is either conjugate to a discrete flow on
        $\mathbb{T}^2$, conjugate to an irrational circle rotation or conjugate
        to a discrete flow on a singleton.
\end{corollary}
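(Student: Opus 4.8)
The plan is to reduce everything to Corollary~\ref{cor:monotoneminimalandeffectivefactorthencharacterisation} together with the classical Poincar\'e classification of minimal circle homeomorphisms. Let $(\mathbb{T}^2\big/S,\mathbb{Z})$ be a monotone minimal factor of the discrete flow $(\mathbb{T}^2,\mathbb{Z})$. First I would dispose of the degenerate case: if $\mathbb{T}^2\big/S$ consists of a single point, we are already in the third alternative. So from now on assume that the factor is non trivial.

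Next I would invoke Lemma~\ref{lem:nontrivialmonotoneminimaldiscretefactoriseffective}, which tells us that this non trivial monotone minimal factor $(\mathbb{T}^2\big/S,\mathbb{Z})$ is effective. Since, as noted in the preliminaries of this section, every effective factor of a flow is relatively effective, $(\mathbb{T}^2\big/S,\mathbb{Z})$ is in fact a monotone minimal relatively effective factor of $(\mathbb{T}^2,\mathbb{Z})$. Corollary~\ref{cor:monotoneminimalandeffectivefactorthencharacterisation} then applies and shows that $\mathbb{T}^2\big/S$ is homeomorphic either to $\mathbb{T}^2$, to $S^1$, or to a single point; the last possibility is excluded because we assumed the factor non trivial.

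It then remains to identify the dynamics in the two surviving cases. If $\mathbb{T}^2\big/S$ is homeomorphic to $\mathbb{T}^2$, then by definition $(\mathbb{T}^2\big/S,\mathbb{Z})$ is conjugate to a discrete flow on the two-torus, which is the first alternative. If instead $\mathbb{T}^2\big/S$ is homeomorphic to $S^1$, then $(\mathbb{T}^2\big/S,\mathbb{Z})$ is conjugate to a minimal homeomorphism of the circle; an orientation-reversing circle homeomorphism has a fixed point and hence is not minimal, and by Poincar\'e's theorem every minimal orientation-preserving circle homeomorphism is conjugate to an irrational rotation, so this case yields an irrational circle rotation, the second alternative.

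I do not expect a genuine obstacle here: the only non-formal ingredient is the classical fact that a minimal circle homeomorphism is conjugate to an irrational rotation, which is standard and is already anticipated in the sentence preceding the statement. The only mildly delicate bookkeeping is to make sure the hypotheses of Corollary~\ref{cor:monotoneminimalandeffectivefactorthencharacterisation} are actually in force --- namely that effectiveness of the non trivial factor yields relative effectiveness --- and that non triviality is used precisely once, to discard the singleton branch of the trichotomy.
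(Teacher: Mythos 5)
Your proposal is correct and follows essentially the same route as the paper, which derives the corollary from Lemma~\ref{lem:nontrivialmonotoneminimaldiscretefactoriseffective} together with Corollary~\ref{cor:monotoneminimalandeffectivefactorthencharacterisation} and the classical fact that minimal discrete flows on the circle are exactly the irrational rotations. Your additional bookkeeping (splitting off the trivial case before invoking the lemma, and passing from effectiveness to relative effectiveness) only makes explicit what the paper leaves implicit.
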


\begin{remark}
We note that an analogous result can be obtained for homeomorphisms of the
closed annulus. In this case, if the maximal equicontinuous factor is minimal,
then it can only be either an irrational rotation on the circle or the identity
on a singleton.
\end{remark}

Finally, we want to close with two consequences specifically concern the
dynamics of torus homeomorphisms and directly relate to current developments in
this topic (compare
\cite{Jaeger2009Linearisation,JaegerPasseggi2015SemiconjugateToIrrational,%
  KoropeckiPasseggiSambarino2016FMC,JaegerTal2016IrrationalRotationFactors})
. Equivalent conditions for the existence of a semiconjugacy from an
area-preserving torus homeomorphism to an irrational rotation of the two-torus
have been established \cite{Jaeger2009Linearisation}. However, whether the
factor map needs to be monotone in this situation has not been known so far. It
now follows from
Theorem~\ref{thm:SeqoncompactandlocallyconnectedHSpaceisMonotone} and
Corollary~\ref{c.mef_on_torus}.

\begin{corollary}
  Suppose that a homeomorphism $f$ of the two-torus is homotopic to the identity
  and has an irrational translation $\rho$ of the two-torus as a
  factor. Further, assume that the factor map is homotopic to the identity. Then
  $\rho$ is the maximal equicontinuous factor and the factor map is monotone.
\end{corollary}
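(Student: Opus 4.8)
The plan is to identify the maximal equicontinuous factor of $(\mathbb{T}^2,f)$ with $(\mathbb{T}^2,\rho)$ directly. Since $\rho$ is a translation of $\mathbb{T}^2$ it is equicontinuous, so $(\mathbb{T}^2,\rho)$ is a factor of the maximal equicontinuous factor $(\mathbb{T}^2/S_{eq},f)$ of $(\mathbb{T}^2,f)$; write $\pi\colon\mathbb{T}^2\to\mathbb{T}^2/S_{eq}$ for the factor map and $q\colon\mathbb{T}^2/S_{eq}\to\mathbb{T}^2$ for the factor map onto $(\mathbb{T}^2,\rho)$, so that the given factor map is $h=q\circ\pi$. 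By Theorem~\ref{thm:SeqoncompactandlocallyconnectedHSpaceisMonotone} the map $\pi$ is monotone; in particular $S_{eq}$ is a monotone and, being a closed equivalence relation on a compact Hausdorff space (Proposition~\ref{pro:Uppersemicontinu_is_closed_CompactHausdorff}), upper semi-continuous equivalence relation, and $\mathbb{T}^2/S_{eq}$ is a compact connected Hausdorff space. The corollary will follow once $q$ is shown to be injective: then $q$ is a homeomorphism conjugating $(\mathbb{T}^2/S_{eq},f)$ to $(\mathbb{T}^2,\rho)$, so $\rho$ is indeed the maximal equicontinuous factor, and $h=q\circ\pi$ is monotone, being the composition of a monotone map with a homeomorphism.

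Next I would pass to the universal cover. As $f$ and $h$ are homotopic to the identity they lift to maps $F,H\colon\mathbb{R}^2\to\mathbb{R}^2$ commuting with integer translations; $H-\mathrm{id}$ is bounded, and after adjusting $H$ one may arrange $H\circ F=R\circ H$, where $R\colon x\mapsto x+\tilde\alpha$ is the lift of $\rho$ and $\tilde\alpha$ lifts the (totally irrational) translation vector of $\rho$. Iterating gives $\sup_{n\in\mathbb{Z}}\sup_{x\in\mathbb{R}^2}\|F^n(x)-x-n\tilde\alpha\|<\infty$, so the rotation set of $f$ is the single totally irrational vector; in particular no $f^n$ with $n\neq0$ has a fixed point, and $(\mathbb{T}^2,f)$ is strongly effective.

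Granting that $(\mathbb{T}^2/S_{eq},f)$ is minimal (discussed below), the argument concludes as follows. Since $\pi$ is monotone, $(\mathbb{T}^2/S_{eq},f)$ is a monotone minimal factor of the discrete flow $(\mathbb{T}^2,\mathbb{Z})$, so by Corollary~\ref{cor:monotoneMinimalfactorofDiscreteflowonTorus} it is conjugate either to a discrete flow on $\mathbb{T}^2$, to an irrational circle rotation, or to a flow on a point. An irrational circle rotation and a one-point flow cannot have the two-torus flow $(\mathbb{T}^2,\rho)$ as a factor --- a factor map between minimal equicontinuous flows is, up to a translation, a continuous group epimorphism, and there is no epimorphism from $\mathbb{T}$ (or from the trivial group) onto $\mathbb{T}^2$ --- so $(\mathbb{T}^2/S_{eq},f)$ is conjugate to a minimal, hence irrational, translation of $\mathbb{T}^2$. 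Under this conjugation $q$ becomes a factor map between irrational toral translations, i.e.\ an affine epimorphism $x\mapsto Ax+c$ with $A\in M_2(\mathbb{Z})$, $\det A\neq0$, and such a map is $|\det A|$-to-one. Since $h=q\circ\pi$ acts as the identity on first homology, $\det A$ divides $1$; hence $|\det A|=1$, $q$ is a homeomorphism, and we are done.

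The substantial point --- and, I expect, the only real obstacle --- is minimality of $(\mathbb{T}^2/S_{eq},f)$. I would argue it by contradiction: if it were not minimal there would be a closed $g$-invariant proper subset $Y_0$ of $\mathbb{T}^2/S_{eq}$, and then $X_0:=\pi^{-1}(Y_0)$ would be a closed $f$-invariant proper subset of $\mathbb{T}^2$ with $h(X_0)=q(Y_0)=\mathbb{T}^2$ --- the last equality because $q(Y_0)$ is a nonempty closed $\rho$-invariant set and $\rho$ is minimal. One must then rule out the existence of such an $X_0$, using that on $X_0$ the dynamics still shadows the rigid translation $R$ up to the bounded error from the previous paragraph, together with the strong effectiveness of $(\mathbb{T}^2,f)$ and the Brouwer-type Lemma~\ref{lem:Brouwerrevisited} applied inside a complementary disc of a suitable $S_{eq}$-class --- essentially the argument in the proof of Theorem~\ref{the:characterizationofminimalfactorsofflowsonTorus}, but now carried out without presupposing minimality of the factor. (Alternatively, minimality of the maximal equicontinuous factor under these hypotheses can be extracted from the linearisation results of~\cite{Jaeger2009Linearisation}.) Once this is in place, everything else is assembly of results already established.
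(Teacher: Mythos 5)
Your plan has the same skeleton as the paper's proof: factor $h=q\circ\pi$ through the maximal equicontinuous factor, use the toral classification results of Section~3 to conclude that the MEF is (conjugate to) a minimal translation of $\mathbb{T}^2$, and then exploit the hypothesis that $h$ is homotopic to the identity to force the remaining factor map $q$ to be a conjugacy. The paper performs this last identification via rotation vectors ($\beta=A\alpha$ and maximality of the MEF), whereas you use functoriality on first homology ($q_*\pi_*=\mathrm{id}$, so $\det q_*=\pm1$); these are minor variants of the same degree argument, and the pieces you actually carry out (strong effectiveness from bounded deviations, exclusion of the circle and point cases, affinity of $q$) are fine.

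The genuine gap is the one you flag yourself and then leave open: minimality of $(\mathbb{T}^2/S_{eq},f)$ is assumed, not proved, and every classification statement you want to invoke (Theorem~\ref{the:characterizationofminimalfactorsofflowsonTorus}, Corollary~\ref{cor:monotoneMinimalfactorofDiscreteflowonTorus}) requires minimality of either the flow or the factor. Your sketch for closing it does not work as described. The proof of Theorem~\ref{the:characterizationofminimalfactorsofflowsonTorus} uses minimality of the \emph{factor} at the decisive step, namely to find $t$ with $f^t(M)$ contained in an inessential complementary disc of a class $M$ before Lemma~\ref{lem:Brouwerrevisited} and strong effectiveness can be applied, so it cannot simply be ``carried out without presupposing minimality of the factor.'' Nor can one hope to rule out a proper closed invariant $X_0\subseteq\mathbb{T}^2$ with $h(X_0)=\mathbb{T}^2$ as such: systems satisfying all the hypotheses and possessing such sets exist (blow up one orbit of an irrational translation into small discs; the complement of the open discs is closed, invariant, proper, and has full $h$-image), so any correct argument must use that $X_0$ is $S_{eq}$-saturated, i.e.\ must exploit equicontinuity of the quotient in an essential way, which your sketch never does. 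The parenthetical fallback to \cite{Jaeger2009Linearisation} also does not apply, since those linearisation results concern conservative homeomorphisms and no such hypothesis is present here. (For comparison, the paper itself is very terse at exactly this point: it cites Corollary~\ref{c.mef_on_torus}, whose hypothesis is minimality of the flow on $\mathbb{T}^2$.) So you have correctly isolated the delicate step, but your proposal does not close it, and the route you suggest for closing it would fail as stated.
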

\proof As $f$ has a two-dimensional irrational rotation $\rho$ as a factor,
Corollary~\ref{c.mef_on_torus} implies that the maximal equicontinuous factor of
$f$ is a two-dimensional irrational rotation $\rho'$ as well. If both rotations
are conjugate, so that $\rho$ is the maximal equicontinuous factor (up to
conjugacy), then monotonicity follows from
Theorem~\ref{thm:SeqoncompactandlocallyconnectedHSpaceisMonotone}.

Hence, suppose for a contradiction that $\rho$ and $\rho'$ are not
conjugate. Suppose that $\rho$ lifts to a translation
$R_\alpha:\R^2\to\R^2,\ x\mapsto x+\alpha$ and $\rho'$ lifts to a corresponding
translation $R_\beta$. Let $h$ be factor map to $\rho$ and $h'$ the maximal
equicontinuous factor map.

The fact that the map $h$ is homotopic to the identity implies that the rotation
number of a suitable lift $F:\R^2\to\R^2$ equals $\alpha$, that is,
$\lim_{n\to\infty} (F^n(x)-x)/n=\alpha$ for all $x\in\R^2$ (e.g. ). Suppose that
the action of $h'$ on the first homology group of $\mathbb{T}^2$, which is
isomorphic to $\Z^2$, is given by the two-by-two integer matrix $A\in
M^{2\times2}(\Z)$. Then a standard argument as in
\cite{kwapisz:1992,Jaeger2009Linearisation} yields that
$\beta=A\alpha$. However, if $A$ is invertible, then this means that $\rho$ and
$\rho'$ are conjugate by the linear torus automorphism $T_A$ induced by $A$. If
$A$ is not invertible, then $\rho'$ is a factor of $\rho$, with factor map
$T_A$, but conversely $\rho$ is not a factor of $\rho'$. This contradicts the
fact that $\rho'$ is the maximal equicontinuous factor.  \qed\medskip

If a torus homeomorphism $f$, homotopic to the identity, is semiconjugate to a
one-dimensional irrational rotation $\rho:\mathbb{T}^1\to\mathbb{T}^1$, then a
construction in \cite{BeguinCrovisierJaeger2016PseudocircleDecomposition} shows
that it is possible that all fibres $h^{-1}(\xi)$ of the factor map
$h:\mathbb{T}^2\to\mathbb{T}^1$ are hereditarily indecomposable continua
(so-called pseudo-circles \cite{bing:1948}) and hence have a `wild' topological
structure. If the maximal equicontinous factor is two-dimensional, then this is
not possible anymore.

Recall that a continuum $K$ is {\em decomposable} if it is the union of two
non-empty subcontinua. We say a vector $u\in\Z^2\setminus\{0\}$ is {\em
  irreducible} if it is not a multiple of some integer vector.
\begin{corollary}
  Suppose that the maximal equicontinuous factor of a torus homeomorphism $f$ is
  (conjugate to) an irrational rotation $\rho$ of the two-torus. Further, assume
  that $h_1$ is a factor map from $f$ to a one-dimensional irrational rotation
  $\rho_1$. Then the fibres of $h_1$ are finite unions of decomposable
  continua. Moreover, if the action $\Z^2\to\Z$ of $h$ on homology is of the
  form $v\mapsto \langle u,v\rangle$ for some irreducible vector
  $u\in\Z^2\setminus\{0\}$, then the fibres of $h$ are decomposable continua.
\end{corollary}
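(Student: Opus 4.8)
The plan is to combine the classification result of Corollary~\ref{c.mef_on_torus} (via its refinement in Theorem~\ref{the:characterizationofminimalfactorsofflowsonTorus} and the accompanying remark) with the monotonicity statement of Theorem~\ref{thm:SeqoncompactandlocallyconnectedHSpaceisMonotone}. The crucial observation is that the two-dimensional irrational rotation $\rho$, being the maximal equicontinuous factor, is a factor of \emph{every} equicontinuous factor of $f$, and in particular of the one-dimensional rotation $\rho_1$; hence there is a factor map $\pi\colon \mathbb{T}^2\to\mathbb{T}^1$ (the projection realizing $\rho$ over $\rho_1$) together with factor maps $h'\colon\mathbb{T}^2\to\mathbb{T}^2$ (the MEF map, monotone by Theorem~\ref{thm:SeqoncompactandlocallyconnectedHSpaceisMonotone}) and $h_1=\pi\circ h'$. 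So each fibre $h_1^{-1}(\xi)$ decomposes as $h'^{-1}(\pi^{-1}(\xi))$, where $\pi^{-1}(\xi)$ is a finite union of parallel rational circles in $\mathbb{T}^2$ (the kernel of a surjective homomorphism $\mathbb{T}^2\to\mathbb{T}^1$ arising from an integer linear map).

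The key steps, in order, are as follows. First I would record that $\rho$ is a factor of $\rho_1$: since $\rho_1$ is an equicontinuous factor of $f$ and $\rho$ is the \emph{maximal} equicontinuous factor, there is a factor map $\pi\colon(\mathbb{T}^2,\rho_1\text{'s space})\ldots$ — more precisely, viewing both as factors, the MEF factors through $\rho_1$, giving $h_1 = \pi\circ h'$ with $\pi$ a continuous surjection $\mathbb{T}^2\to\mathbb{T}^1$ intertwining $\rho$ and $\rho_1$. Second, I would identify the structure of $\pi$: any factor map between a torus rotation and a circle rotation is, up to conjugacy, induced by a surjective group homomorphism $\mathbb{T}^2\to\mathbb{T}^1$, i.e. by an integer row vector $u\in\Z^2\setminus\{0\}$ via $v\mapsto\langle u,v\rangle$; its fibres $\pi^{-1}(\xi)$ are cosets of the closed subgroup $\ker$, which is a finite disjoint union of $\gcd(u)$ many parallel rational circles (exactly one circle when $u$ is irreducible). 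Third, I would pull this back through the monotone map $h'$: each fibre $h_1^{-1}(\xi) = h'^{-1}(\pi^{-1}(\xi))$ is the $h'$-preimage of a finite union of circles, so it is a finite union of sets of the form $h'^{-1}(C)$ where $C\cong S^1$ is an essential simple closed curve in $\mathbb{T}^2$. Fourth — the heart of the decomposability claim — I would show $h'^{-1}(C)$ is a decomposable continuum: write $C = C_+\cup C_-$ as the union of two arcs overlapping in two points; then $h'^{-1}(C) = h'^{-1}(C_+)\cup h'^{-1}(C_-)$, and each $h'^{-1}(C_\pm)$ is a continuum because $h'$ is monotone (preimages of singletons connected, hence by Lemma~\ref{lem:monotonerelationsI}(i)-type reasoning preimages of connected sets are connected) and compact, and each is a proper subcontinuum since $h'$ is surjective and $C_\pm\subsetneq C$. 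Hence $h'^{-1}(C)$ is decomposable. For the ``moreover'' part, when $u$ is irreducible the fibre $\pi^{-1}(\xi)$ is a single circle $C$, so $h^{-1}(\xi)=h'^{-1}(C)$ is itself a decomposable continuum.

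The main obstacle I anticipate is making precise and rigorous the passage from ``$\rho_1$ is an equicontinuous factor'' to ``$\rho$ factors onto $\rho_1$ through a map of the required algebraic form'', together with the claim that each fibre $\pi^{-1}(\xi)$ is a finite union of circles: one must invoke that the MEF is \emph{maximal} (universal among equicontinuous factors) to get the factor map $\pi$, then use the classification of closed subgroups of $\mathbb{T}^2$ and the homology action hypothesis to pin down that $\pi$ is conjugate to $v\mapsto\langle u,v\rangle$ — this is the ``standard argument as in \cite{kwapisz:1992,Jaeger2009Linearisation}'' alluded to earlier, linking the homotopy/homology data of $h$ to the linear part of $\pi$. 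A secondary, more technical point is verifying that $h'^{-1}$ of a closed arc is a genuine (nondegenerate) subcontinuum — connectedness follows from monotonicity of $h'$ plus compactness, and properness/nondegeneracy follows because $h'$ is onto and the two arcs are proper nonempty subsets of $C$ whose preimages cannot both be all of $h'^{-1}(C)$; one also notes $h'^{-1}(C)$ itself is nondegenerate since $C$ is. Everything else is routine once these structural facts are in place.
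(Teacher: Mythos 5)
Your proposal is correct and follows essentially the same route as the paper: factor $h_1$ through the monotone maximal equicontinuous factor map, identify the intermediate map from $\rho$ to $\rho_1$ as induced by an integer vector whose fibres are finite unions of ($\gcd$-many, one in the irreducible case) essential circles, and split each circle into two arcs whose preimages under the monotone map are proper subcontinua, yielding decomposability. The only flaw is verbal: you twice say ``$\rho$ is a factor of $\rho_1$'' (and that ``the MEF factors through $\rho_1$'') when the correct direction is the reverse, but the factorization $h_1=\pi\circ h'$ with $\pi\colon\mathbb{T}^2\to\mathbb{T}^1$ that you actually use is exactly the paper's $h_1=\tilde h\circ h$.
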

Note that due to the characterisation in
\cite{JaegerPasseggi2015SemiconjugateToIrrational}, the above continua are all
     {\em annular} (the complement of a topological annulus embedded in the
     torus).

\proof Let $h$ be the factor map from $f$ to the maximal equicontinuous factor
$\rho$. Then $h_1=\tilde h\circ h$, where $\tilde h$ is a factor map from $\rho$
to $\rho_1$. However, any factor map from a two-dimensional to a one-dimensional
irrational rotation is of the form $x\mapsto \langle x,u\rangle$ for some
integer vector $u\in\Z^2\setminus \{0\}$. By performing a linear change of
coordinates, we may assume that $u$ is horizontal, that is, $u=(m,0)$ for some
$m\in\N$. This means that $\tilde h$ is just the composition of the projection
to the first coordinate and the covering map $\xi\mapsto m\xi\bmod 1$ on the
circle.

If $m=1$, then it follows from the construction in \cite[Proof of Theorem
  1]{JaegerPasseggi2015SemiconjugateToIrrational} that the fibres of $\tilde
h_1=h\circ h=\pi_1\circ h$ are essential annular continua. However, we have that
\[
   (\pi_1\circ h)^{-1}(\xi) \ = \ h^{-1}(\{\xi\}\times \mathbb{T}^1) \ =
\ h^{-1}([0,1/2]) \cup h^{-1}([1/2,1]) \
\]
and the two sets on the right are subcontinua of the fibre $(\pi_1\circ
h)^{-1}(\xi)$, since preimages of connected sets under monotone maps are are
connected.

If $m>1$, then fibres of $h_1=\tilde h\circ h$ are finite unions of fibres of
$\pi_1\circ h$. This completes the proof.\qed\medskip


\footnotesize

 \vspace{10mm} \noindent
\begin{tabular}{l l }
M. Sc. Till Hauser & Prof. Dr. Tobias J\"ager \\ Faculty of Mathematics and
Computer Science & Faculty of Mathematics and Computer Science\\ Institute of
Mathematics & Institute of Mathematics \\ Friedrich Schiller University Jena &
Friedrich Schiller University Jena\\ 07743 Jena & 07743 Jena\\ Germany &
Germany\\{\tt till.hauser@uni-jena.de}&{\tt tobias.jaeger@uni-jena.de}
\end{tabular}

\end{document}